\newcommand{\id}{{\boldsymbol{\mathbbm{1}}}}
 \newtheorem{theorem}{Theorem}[section]
 \newtheorem{lemma}[theorem]{Lemma}
 \newtheorem{remark}[theorem]{Remark}
 \newtheorem{proposition}[theorem]{Proposition}
 \newtheorem{corollary}[theorem]{Corollary}
 \newcommand{\R}{\mathbb{R}}
 \newcommand{\K}{\mathbb{K}}
 \newcommand{\norm}[2][]{\|#2\|_{#1}}
 \newcommand{\matr}[1]{\begin{pmatrix}#1\end{pmatrix}}
\DeclareMathOperator{\diag}{diag}
\DeclareMathOperator{\Sym}{Sym}
\DeclareMathOperator{\PSym}{PSym}
\DeclareMathOperator{\dev}{dev}
\DeclareMathOperator{\GL}{GL}
\DeclareMathOperator{\SL}{SL}
\DeclareMathOperator{\SO}{SO}
\DeclareMathOperator{\OO}{O}
\DeclareMathOperator{\CSO}{CSO}
\def\barr{\begin{array}}
\DeclareMathOperator{\tr}{tr}
\def\dd{\displaystyle}
\def\barr{\begin{array}}
\def\earr{\end{array}}
\def\bec#1{\begin{equation}\label{#1}}
\def\becn{\begin{equation*}}
\def\endec{\end{equation}}
\def\endecn{\end{equation*}}
\let\@fnsymbol\@arabic
\newcommand{\GLp}{\GL^{\!+}}
\newcommand{\GLpz}{\GLp(2)}
\newcommand{\setvert}{\:|\:}
\DeclareMathOperator{\dist}{dist}
\newcommand{\disteuc}{\dist_{\mathrm{Euclid}}}
\newcommand{\dg}{\dist_{\mathrm{geod}}}
\newcommand{\z}{z}
\newcommand{\ahalf}{{1\!/\!2}}
\newcommand{\ftilde}{{\widetilde{f}}}
\newcommand{\dx}{\mathrm{d}x}
\begin{document}
\title{Rank-one convexity implies polyconvexity for isotropic, objective and isochoric  elastic energies in the two-dimensional case}
\author{%
Robert J.\ Martin\thanks{Corresponding author: Robert J.\ Martin, Lehrstuhl f\"{u}r Nichtlineare Analysis und Modellierung, Fakult\"{a}t f\"{u}r Mathematik,
Universit\"{a}t Duisburg-Essen, Thea-Leymann Str. 9, 45127 Essen, Germany; email:   robert.martin@uni-due.de}
\quad and \quad
Ionel-Dumitrel Ghiba\thanks{Ionel-Dumitrel Ghiba, \ \  Lehrstuhl f\"{u}r Nichtlineare Analysis und Modellierung, Fakult\"{a}t f\"{u}r Mathematik,
Universit\"{a}t Duisburg-Essen, Thea-Leymann Str. 9, 45127 Essen, Germany;  Alexandru Ioan Cuza University of Ia\c si, Department of Mathematics,  Blvd.
Carol I, no. 11, 700506 Ia\c si,
Romania; and  Octav Mayer Institute of Mathematics of the
Romanian Academy, Ia\c si Branch,  700505 Ia\c si, email: dumitrel.ghiba@uni-due.de, dumitrel.ghiba@uaic.ro}
\quad and \quad
Patrizio Neff\thanks{Patrizio Neff,  \ \ Head of Lehrstuhl f\"{u}r Nichtlineare Analysis und Modellierung, Fakult\"{a}t f\"{u}r
Mathematik, Universit\"{a}t Duisburg-Essen,  Thea-Leymann Str. 9, 45127 Essen, Germany, email: patrizio.neff@uni-due.de}
}
\date{\today\vspace*{-1em}}
\newgeometry{top=2em,bottom=6.65em}
\maketitle

\begin{abstract}
We show that in the two-dimensional case, every objective, isotropic and isochoric energy function which is rank-one convex on $\GLpz$ is already polyconvex on $\GLpz$. Thus we negatively answer Morrey's conjecture in the subclass of isochoric nonlinear energies, since polyconvexity implies quasiconvexity. Our methods are based on different representation formulae for objective and isotropic functions in general as well as for isochoric functions in particular.
We also state criteria for these convexity conditions in terms of the deviatoric part of the logarithmic strain tensor.
\\[1.4em]
\textbf{Mathematics Subject Classification}: 74B20, 74G65, 26B25
\\[1.4em]
\textbf{Key words}: rank-one convexity, polyconvexity, quasiconvexity, Morrey's conjecture, isochoric energies, conformal invariance, nonlinear elasticity, volumetric-isochoric split, calculus of variations
\end{abstract}

\tableofcontents

\restoregeometry
\newgeometry{top=9em,bottom=7em,left=9em,right=9em}

\newpage

\section{Introduction}
\label{section:introduction}

We consider different convexity properties of a real-valued function $W:\GLp(2)\to\R$ on the group $\GLp(2)=\{X\in\R^{2\times 2} \,\setvert\, \det X > 0\}$ of invertible $2\times 2$-matrices with positive determinant. Our work is mainly motivated by the theory of nonlinear hyperelasticity, where $W(\nabla\varphi)$ is interpreted as the energy density of a deformation $\varphi:\Omega\to\R^2$; here, $\Omega\subset\R^2$ corresponds to a planar elastic body in its reference configuration. The elastic energy $W$ is assumed to be \emph{objective} as well as \emph{isotropic}, i.e.\ to satisfy the equality
\[
	W(Q_1\,F\,Q_2) = W(F) \quad\text{ for all }\; F\in\GLpz \;\text{ and all }\; Q_1,Q_2\in\SO(2)\,,
\]
where $\SO(2)=\{X\in \R^{2\times 2} \,|\, X^T X=\id,\;\det{X}=1\}$ denotes the special orthogonal group.

Different notions of convexity play an important role in elasticity theory. Here, we focus on the concepts of \emph{rank-one convexity}, \emph{polyconvexity} and \emph{quasiconvexity}.
Following a definition by Ball \cite[Definition 3.2]{Ball77}, we say that $W$ is  \emph{rank-one convex} on $\GLpz$ if it is convex on all closed line segments in $\GLpz$ with end points differing by a matrix of rank one, i.e
 \begin{align*}
W( F+(1-\theta)\, \xi\otimes \eta)\leq \theta \,W( F)+(1-\theta) W(F+\xi\otimes \eta)
\end{align*}
for all $F\in \GLpz$, $\theta\in[0,1]$ and all $\,\, \xi,\, \eta\in\mathbb{R}^2$ with $F+t\, \xi\otimes \eta\in \GLpz$ for all
$t\in[0,1]$, where $\xi\otimes\eta$ denotes the dyadic product.
For sufficiently regular functions $W:\GLpz\rightarrow \mathbb{R}$, rank-one convexity is equivalent to \emph{Legendre-Hadamard ellipticity} (cf.\ \cite{de2012note}) on $\GLpz$:
\begin{align*}
D^2_F W(F)(\xi\otimes\eta,\xi\otimes\eta)\geq0 \quad\text{ for all }\;\xi,\eta\in\mathbb{R}^2\setminus \{0\},\;\; F\in \GLpz\,.
\end{align*}
The rank-one convexity is connected with the study of wave propagation 
\cite{eremeyev2007constitutive,ZubovRudev,SawyersRivlin78,ChiritaGhiba1} or hyperbolicity of the dynamic problem, and plays an important role in the existence and uniqueness theory for linear elastostatics and elastodynamics \cite{Ogden83,fosdick2007note,edelstein1968note,simpson2008bifurcation}, cf.\ \cite{ernst1998ellipticity,knowles1976failure}.
It also ensures the correct spatial and temporal behaviour of the solution to the boundary value problems for a large class of materials \cite{chirictua1999time,chirictua2007strong,GhibaAn,ghiba2015spatial}. Important criteria for the rank-one convexity of functions were established by Knowles and Sternberg \cite{knowles1975ellipticity} as well as by \v{S}ilhav\'y \cite{SilhavyPRE99} and Dacorogna \cite{Dacorogna01}.
The notion of \emph{polyconvexity} was introduced into the context of nonlinear elasticity theory by John Ball \cite{Ball77,Ball78} (cf.\ \cite{Ball78,Raoult86,Dacorogna08}). In the two-dimensional case, a function $W:\GLpz\rightarrow \mathbb{R}$ is called  polyconvex if and only if it is expressible in the form
\[
	W(F) =P(F,\det F), \qquad P : \; \R^{2\times2}\times\R \;\cong\; \mathbb{R}^{5}\rightarrow\mathbb{R}\cup\{+\infty\}\,,
\]
where $P(\cdot,\cdot)$  is convex. Since the polyconvexity of an energy $W$ already implies the weak lower-semicontinuity of the corresponding energy functional, it is of fundamental importance to the direct methods in the calculus of variations. In particular, this implication is still valid for functions $W$ defined only on $\GLpz$ which do not satisfy polynomial growth conditions; this is generally the case in nonlinear elasticity.

Lastly, a function $W$ is called \emph{quasiconvex} at $\overline{F}\in \GLp(n)$ if the condition
\begin{align}
\int_{\Omega}W(\overline{F}+\nabla \vartheta)\,\dx\geq \int_{\Omega}W(\overline{F})\,\dx=W(\overline{F})\cdot |\Omega| \quad \text{for every bounded open set}
\quad \Omega\subset\mathbb{R}^n
\end{align}
holds for all $ \vartheta\in C_0^\infty (\Omega)$ such that $\det(\overline{F}+\nabla \vartheta)>0$.
Note carefully that there are alternative definitions of quasiconvexity for functions on $\GLp(n)$, cf.\ \cite{ball2015incompatible}. %
Although quasiconvexity of an energy function $W$ is sufficient for the weak lower-semicontinuity of the corresponding energy functional if $W:\R^{n\times n}\to\R$ is continuous and satisfies suitable growth conditions \cite{sivaloganathan1988implications,chelminski2006new}, it is generally not sufficient in the case of energy functions defined only on $\GLp(n)$.
\pagebreak %

It is well known that the implications
    \begin{align*}\hspace{-4mm}
     \text{polyconvexity} \quad\Longrightarrow\quad \text{quasiconvexity} \quad\Longrightarrow\quad   \text{rank-one convexity}
    \end{align*}
hold for arbitrary dimension $n$. However, it is also known that rank-one convexity does not imply polyconvexity in general (see the Alibert-Dacorogna-Marcellini example  \cite{alibert1992example}, cf.\ \cite[p.~221]{Dacorogna08} and \cite{aubert1987counterexample}), and that for $n>2$  rank-one convexity does not imply quasiconvexity \cite{Ball84b,Sverak92,Sverak98,Dacorogna08}.

The question whether rank-one convexity implies quasiconvexity in the two-dimensional case is considered to be one of the major open problems in the calculus of variations \cite{Balldoes,Ball02,conti2005rank,parry1995planar,parry2000rank}. Morrey conjectured in 1952 that the two are not equivalent \cite{morrey1952quasi,pedregal2014some,astala2012quasiconformal,Neff_critique05,kalamajska2003new,kalamajska2015}, i.e.\ that there exists a function $W:\R^{2\times2}\to\R$ which is rank-one convex but not quasiconvex.
A number of possible candidates have already been proposed: for example, the function (see \cite{volberg2012ahlfors}) $W^{\#}:\mathbb{R}^{2\times2}\rightarrow \mathbb{R}$ with
 \begin{align}
 W^{\#}(F)&=\left\{\begin{array}{lll}
 -4\, \det F & \text{if }\; \sqrt{\|F\|^2-2\, \det F}+\sqrt{\|F\|^2+2\, \det F}\leq 1,\\
 2\sqrt{\|F\|^2-2\, \det F}-1&\text{otherwise,}
 \end{array}\right.\label{eq:possibleCounterExample}\\[.5em]
 &=\left\{\begin{array}{lll}
 -4\, \lambda_{\rm min}\,\lambda_{\rm max} & \text{if }\; \lambda_{\rm max}\leq \frac{1}{2},\\
 2\,(\lambda_{\rm max}-\lambda_{\rm min})-1 \qquad&\text{otherwise,}
 \end{array}\right.\nonumber
 \end{align}
where $\lambda_{\rm min},\lambda_{\rm max}$ denote the smallest and the largest singular value of $F$, respectively, is known to be rank-one convex\footnote{This follows from the convexity of the function $\lambda_{\rm max}\pm\lambda_{\rm min}=\sqrt{\|F\|^2\pm 2\, \det F}=\sqrt{(F_{11}\pm F_{22})^2(F_{21}\mp F_{12})^2}$, see \cite[Lemma 2.2]{conti2003polyconvexity}. In \cite[Remark 1]{conti2003polyconvexity} it is also noticed that  any SO(2)-invariant polyconvex function can be written as supremum of linear
combinations of the functions $\varphi_c^{\pm}=\lambda_{\rm max} \pm \lambda_{\rm min}-\frac{\lambda_{\rm max}\lambda_{\rm min}}{c}$, for $c\in \mathbb{R}\setminus\{0\}$, $\varphi_0^{\pm}=-\lambda_{\rm max}\, \lambda_{\rm min}$,  by writing it first as supremum of polyaffine functions and
then exploiting $\SO(2)$-invariance. Thus the individual branches of $W^{\#}$ are polyconvex.}, but it is not known whether this function is quasiconvex at $F=0$.

There are, however, a number of special cases for which the two convexity conditions are, in fact, equivalent: %
for example, every quasiconvex quadratic form is polyconvex \cite{terpstra1939darstellung,serre1983,marcellini1984quasiconvex,Sverak92} and, as M\"uller \cite{muller1999rank} has shown, rank-one convexity implies quasiconvexity in dimension two on diagonal matrices \cite{conti2003polyconvexity,chaudhuri2003rank,conti2008quasiconvex}. Moreover, Ball and Murat \cite{Ball84b} showed that every energy function $W:\R^{2\times2}\to\R$ of the form $W(F)=\norm{F}^\alpha + h(\det F)$ with a function $h:\R\to\R$ and $1\leq\alpha<2$ is polyconvex if and only if it is rank-one convex. Iwaniec et al.\ even conjectured that \enquote{\emph{continuous rank-one convex functions $W:\R^{2\times2}\to\R$ are quasiconvex}} \cite[Conjecture 1.1]{astala2012quasiconformal} in general\footnote{Interestingly, the related (but not equivalent) question whether isotropic rank-one convex sets in $\R^{2\times2}$ are already quasiconvex has a positive answer \cite{kreiner2006topology,heinz2015quasiconvexity}.} (whereas Pedregal found \enquote{\emph{some evidence in favour}} \cite{pedregal2014some} of the hypothesis that the two conditions are not equivalent \cite{pedregal1996some}).

In this spirit, we present another condition under which rank-one convexity implies polyconvexity (and thus quasiconvexity), thereby further complicating the search for a counterexample:
we show that any function $W:\GLpz\to\R$ which is isotropic and objective (i.e.\ bi-$\SO(2)$-invariant) as well as \emph{isochoric} is rank-one convex if and only if it is polyconvex.
A function $W:\GLpz\to\R$ is called isochoric\footnote{%
In elasticity theory, isochoric energy functions measure only the \emph{change of form} of an elastic body, not the \emph{change of size}.
For more general elastic energy functions $W\colon\GLpz\to\R$, an additive \emph{isochoric-volumetric split} \cite{ndanou2014hyperbolicity} of the form
\[
	W(F) = W^{\mathrm{iso}}(F) + W^{\mathrm{vol}}(\det F) = W^{\mathrm{iso}}\left(\frac{F}{(\det F)^{\ahalf}}\right) + W^{\mathrm{vol}}(\det F)
\]
into an isochoric part $W^{\mathrm{iso}}\colon\GLpz\to\R$ and a volumetric part $W^{\mathrm{vol}}:\R^+\to\R$ is sometimes assumed, cf.\ Section \ref{sectionContains:applicationToIsoVolSplit}.}
if
\[
	W(a\,F)=W(F) \quad\text{ for all }\; a\in\R^+\colonequals(0,\infty)\,.
\]

Note carefully that we explicitly consider functions which are defined only on $\GLpz$, and not on all of $\R^{2\times2}$. Such a function $W$ can equivalently be expressed as a (discontinuous) function $W:\R^{2\times2}\to\R\cup\{+\infty\}$ with $W(F)=+\infty$ for all $F\notin\GLpz$. In many fields, these energy functions are more suitable for applications than finite-valued functions on $\R^{2\times2}$. In the theory of nonlinear hyperelasticity, for example, the requirement $W(F)\to\infty$ as $\det F\to0$ is commonly assumed to hold. The left- and right-$\SO(2)$-invariance is also motivated by applications in nonlinear elasticity and corresponds to the requirements of objectivity and isotropy, respectively.\footnote{If functions on $\R^{2\times2}$ are considered, then the isotropy requirement is oftentimes assumed to be right-$\OO(2)$-invariance, whereas right-$\SO(2)$-invariance is the natural isotropy condition for functions on $\GLpz$.} While Morrey's conjecture is usually stated for finite-valued functions on all of $\R^{2\times2}$ only, energy functions on $\GLpz$ have long been a valuable source of inspiring examples; indeed, for $n>2$, an early example of a non-continuous function mapping $\R^{n\times n}$ to $\R\cup\{+\infty\}$ which is rank-one convex but not quasiconvex was given by Ball \cite{ball1990sets}, even before \v{S}ver\'ak \cite{Sverak92} found a continuous finite-valued counterexample.
Additional conditions for rank-one convexity of objective and isotropic energy functions on $\GLpz$ have also been considered by \v{S}ilhav\'y \cite{silhavy2002convexity}, Parry and \v{S}ilhav\'y \cite{parry2000rank}, Aubert \cite{aubert1995necessary} and Davies \cite{davies1991simple}.

Note also that a function $W:\GLpz\to\R$ is isotropic, objective and isochoric if and only if $W$ is (left- and right-) \emph{conformally invariant}, i.e.\ $W(A\,F\,B)=W(F)$ for all $A,B\in\CSO(2)$, where
\[
	\CSO(2)=\R^+\cdot\SO(2)=\{a\,Q\in\GLpz \,|\, a\in\R^+\,,\; Q\in\SO(2)\}
\]
denotes the \emph{conformal special orthogonal group}. In the literature, one also encounters the concept of \emph{conformal energies} \cite{yan1997rank}, which are functions $W$ such that $W(F)$ vanishes if and only if $F\in\CSO(2)$, e.g.\ $W(F)=\norm{F}^2-2\det F$. However, as this example shows, such energies are generally not isochoric (or conformally invariant).

The idea of finding new isochoric functions which are rank-one convex has arisen from the search for a function of the isotropic invariants $\|\dev_2 \log U\|^2$ and $[\tr (\log U)]^2$ of the logarithmic strain tensor $\log U$ which is rank-one convex or polyconvex (see \cite{neff2013hencky,neff2013hencky,Neff_Osterbrink_Martin_hencky13,NeffEidelOsterbrinkMartin_Riemannianapproach,criscione2000invariant,Walton05}), since the commonly used quadratic Hencky energy
\[
	W_{_{\rm H}}(F) = W_{_{\rm H}}^{\mathrm{iso}}\left(\frac {F}{(\det F)^{\ahalf}}\right)+W_{_{\rm H}}^{\mathrm{vol}}(\det F) = \mu \,\|\dev_2\log U\|^2+ \frac{\kappa}{2}\,[\tr (\log U)]^2
\]
is not rank-one convex even in $\SL(2)\colonequals\{X\in \GLpz\;|\det{X}=1\}$, see \cite{NeffGhibaLankeit}. Here, $\mu>0$ is the infinitesimal shear modulus,
$\kappa=\frac{2\mu+3\lambda}{3}>0$ is the infinitesimal bulk modulus, $\lambda$ is the first Lam\'{e} constant, $F=\nabla \varphi$ is the gradient of deformation, $U=\sqrt{F^T F}$ is the right stretch tensor and $\log U$ denotes the principal matrix logarithm of $U$. For $X\in\R^{2\times 2}$, we denote by $\|{X}\|$ the Frobenius tensor norm, $\tr{(X)}$ is the trace of $X$, $\dev_2 X =X-\frac{1}{2}\,\tr(X)\cdot\id$ is the deviatoric part of $X$ and $\id$ denotes the identity tensor on $\R^{2\times 2}$.

Promising candidates for a polyconvex formulation in terms of $\|\dev_2 \log U\|^2$ and $[\tr (\log U)]^2$ are the exponentiated Hencky energies previously considered in a   series of papers \cite{NeffGhibaLankeit,NeffGhibaPoly,NeffGhibaPlasticity,ghiba2015exponentiated}:
\begin{align}\label{thdefHen}\hspace{-2mm}
 W_{_{\rm eH}}(F) = W_{_{\rm eH}}^{\mathrm{iso}}\left(\frac {F}{(\det F)^{\ahalf}}\right)+W_{_{\rm eH}}^{\mathrm{vol}}(\det F) = \dd\frac{\mu}{k}\,e^{k\,\|\dev_2\log U\|^2}+\frac{\kappa}{2\,\widehat{k}}\,e^{\widehat{k}\,[(\log \det U)]^2}\,,
\end{align}
where  $k,\widehat{k}$ are additional dimensionless
parameters.

\section{Preliminaries}\setcounter{equation}{0}
In order to establish our main result, i.e.\ that rank-one convexity and polyconvexity are equivalent for isochoric energy functions, we first need to recall some conditions for these convexity properties. %
In the following, we will assume $W:\GLpz\to\R,\; F\mapsto W(F)$ to be an objective, isotropic function. It is well known that such a function can be expressed in terms of the singular values of $F$: there exists a uniquely determined function $g:\R^+\times\R^+\to\R$ such that
\begin{equation}
\label{eq:gDefinition}
	W(F) = g(\lambda_1,\, \lambda_2)
\end{equation}
for all $F\in\GLpz$ with singular values $\lambda_1,\lambda_2$. Note that the isotropy of $W$ also implies the symmetry condition $g(\lambda_1,\lambda_2)=g(\lambda_2,\lambda_1)$.

\subsection{A sufficient condition for polyconvexity}

A proof of the following lemmas can be found in \cite{ghiba2015exponentiated}.
\begin{lemma}
If $\, Y:[1,\infty)\rightarrow\mathbb{R}$ is non-decreasing and convex and $Z:\GLpz\rightarrow[1,\infty)$ is polyconvex, then $Y\circ Z$ is polyconvex.
\end{lemma}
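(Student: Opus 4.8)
The plan is to reduce everything to the elementary fact that the composition of a non-decreasing convex function with a convex function is convex, after first extending $Y$ suitably. By definition of polyconvexity, write $Z(F) = P(F,\det F)$ for a convex $P\colon \R^{2\times2}\times\R\to\R\cup\{+\infty\}$; note that $P$ is genuinely extended-real-valued in general, so the bookkeeping with the value $+\infty$ will have to be carried along, but it causes no real difficulty since all the relevant inequalities become trivial whenever an endpoint value is $+\infty$.

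First I would extend $Y$ to a function $\bar Y\colon\R\cup\{+\infty\}\to\R\cup\{+\infty\}$ by declaring $\bar Y(t)=Y(1)$ for $t\le 1$, $\bar Y(t)=Y(t)$ for $t\in[1,\infty)$, and $\bar Y(+\infty)=+\infty$. The key point — and the only place where the hypotheses are genuinely used — is that this extension is again \emph{non-decreasing and convex} on $\R\cup\{+\infty\}$: monotonicity is clear, and convexity at the junction $t=1$ holds precisely because the right derivative $Y'_+(1)$ exists and is $\ge 0$ (this is where \enquote{$Y$ non-decreasing} enters), so the slopes of $\bar Y$ pass from $0$ on $(-\infty,1)$ to the non-decreasing, nonnegative slopes of $Y$ on $(1,\infty)$. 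The convention $\bar Y(+\infty)=+\infty$ is compatible with the convexity inequality $\bar Y(\theta\, a+(1-\theta)\,b)\le\theta\,\bar Y(a)+(1-\theta)\,\bar Y(b)$ for $a,b\in\R\cup\{+\infty\}$, $\theta\in[0,1]$, since the right-hand side is $+\infty$ whenever $a$ or $b$ is $+\infty$ and $\theta\in(0,1)$.

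Next I would set $\widetilde P\colonequals\bar Y\circ P\colon\R^{2\times2}\times\R\to\R\cup\{+\infty\}$ and show it is convex by the standard three-step chain: for any two points and $\theta\in[0,1]$, convexity of $P$ gives $P$ at the convex combination $\le$ the convex combination of the values; applying the non-decreasing map $\bar Y$ preserves this inequality; and convexity of $\bar Y$ then bounds the result by the convex combination of $\bar Y\circ P$. Chaining these yields convexity of $\widetilde P$, with all $+\infty$ cases handled by the conventions just fixed. Finally, since $Z(F)=P(F,\det F)\in[1,\infty)$ for every $F\in\GLpz$ and $\bar Y$ agrees with $Y$ on $[1,\infty)$, we get $\widetilde P(F,\det F)=\bar Y(Z(F))=Y(Z(F))$, so $\widetilde P$ exhibits $Y\circ Z$ as polyconvex.

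I do not expect a substantive obstacle here; the proof is short. The one point requiring care is the extension step — one must not simply compose $Y$ with $P$, because $P$ may take values below $1$ (or $+\infty$) at arguments not of the form $(F,\det F)$, and $Y$ is only defined, and only known to be convex, on $[1,\infty)$. Verifying that the flat extension to $(-\infty,1]$ keeps $\bar Y$ convex is exactly what forces the use of the monotonicity hypothesis, and checking compatibility with $+\infty$ is the only remaining piece of routine verification.
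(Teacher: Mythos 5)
Your proof is correct. The paper itself does not prove this lemma but defers to \cite{ghiba2015exponentiated}; your argument is the standard one for such composition results, and you correctly identify and handle the one non-trivial point, namely that $Y$ must first be extended to a non-decreasing convex function on all of $\R\cup\{+\infty\}$ (the flat extension to the left of $t=1$ being convex precisely because $Y$ is non-decreasing) before it can be composed with the convex representative $P$, whose values need not lie in $[1,\infty)$ off the set of points of the form $(F,\det F)$.
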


\begin{lemma}
The function $Z:\GLpz\to[1,\infty)$ with $Z(F)=\frac{\norm[\mathrm{op}]{F}^2}{\det F}$, where $\norm[\mathrm{op}]{F}=\max\{\lambda_1,\,\lambda_2\}$ denotes the spectral norm of $F\in\GLpz$ with singular values $\lambda_1,\lambda_2$, is polyconvex on $\GLpz$. Note that the function $Z$ can be expressed as $Z(F)=g(\lambda_1,\lambda_2)$ with $g(\lambda_1\,\lambda_2) = \frac{\max\{\lambda_1^2,\lambda_2^2\}}{\lambda_1\,\lambda_2}$.
\end{lemma}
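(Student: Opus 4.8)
The statement to prove is that the distortion-type function $Z(F)=\norm[\mathrm{op}]{F}^2/\det F$ is polyconvex on $\GLpz$.

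The plan is to verify polyconvexity directly from the definition, by exhibiting a convex $P\colon\R^{2\times2}\times\R\to\R\cup\{+\infty\}$ with $Z(F)=P(F,\det F)$, exploiting that $Z$ has the structure of a supremum of ``quadratic-over-linear'' terms. Recall that for unit vectors $v,w\in\R^2$ one has $\norm[\mathrm{op}]{F}=\sup_{|v|=1}|Fv|=\sup_{|v|=|w|=1}\langle Fv,w\rangle$, hence $\norm[\mathrm{op}]{F}^2=\sup_{|v|=|w|=1}\langle Fv,w\rangle^2$. I would therefore set
\[
	P(F,\delta) \;=\; \begin{cases} \dd\sup_{|v|=|w|=1}\ \frac{\langle F v,\,w\rangle^2}{\delta}\,, & \delta>0\,,\\[1.1em] +\infty\,, & \delta\le 0\,,\end{cases}
\]
so that $P(F,\det F)=\norm[\mathrm{op}]{F}^2/\det F=Z(F)$ for every $F\in\GLpz$.

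It then remains to check that $P$ is convex on $\R^{2\times2}\times\R$. For each fixed pair $(v,w)$ the map $F\mapsto\langle Fv,w\rangle$ is linear, and the elementary fact that $(x,\delta)\mapsto x^2/\delta$ is jointly convex on $\R\times(0,\infty)$ (its epigraph is a rotated second-order cone) shows that $(F,\delta)\mapsto\langle Fv,w\rangle^2/\delta$ is jointly convex on $\R^{2\times2}\times(0,\infty)$. Extending a convex function defined on the convex set $\R^{2\times2}\times(0,\infty)$ by the value $+\infty$ on the complement again produces a convex function on all of $\R^{2\times2}\times\R$, and a pointwise supremum of convex functions is convex; hence $P$, being the supremum over $(v,w)$ of such extended functions, is convex. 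Together with the identity $Z(F)=P(F,\det F)$ this is precisely the definition of polyconvexity of $Z$ on $\GLpz$.

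I do not expect a serious obstacle here: the argument rests entirely on recognizing the quadratic-over-linear structure of $Z=\norm[\mathrm{op}]{\cdot}^2/\det(\cdot)$. The only point worth care is that the spectral norm is not differentiable, so it must be handled through its variational characterization as a supremum of linear forms — which is exactly why writing $P$ as a supremum is convenient. As an alternative one may avoid the supremum and instead use the composition rule ``if $\phi(y,\delta)$ is jointly convex and nondecreasing in $y$, and $g$ is convex, then $(F,\delta)\mapsto\phi(g(F),\delta)$ is jointly convex'', applied with $\phi(y,\delta)=y^2/\delta$ (jointly convex on $\R\times(0,\infty)$ and nondecreasing in $y\ge 0$) and $g=\norm[\mathrm{op}]{\cdot}$ (a nonnegative, convex norm on $\R^{2\times2}$).
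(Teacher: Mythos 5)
Your argument is correct, and it is self-contained, which is more than the paper offers: the paper does not prove this lemma at all but merely cites \cite{ghiba2015exponentiated} for it. The route taken there (and implicit in the footnote of the introduction) is to first establish that $F\mapsto\lambda_{\max}(F)=\norm[\mathrm{op}]{F}=\tfrac12\bigl(\sqrt{\norm{F}^2+2\det F}+\sqrt{\norm{F}^2-2\det F}\bigr)$ is a nonnegative convex function of $(F,\det F)$, and then to compose it with the jointly convex, monotone-in-$y$ perspective function $(y,\delta)\mapsto y^2/\delta$ — which is exactly the ``alternative'' you sketch in your last sentence. Your main argument replaces the explicit convexity of $\lambda_{\max}$ by its variational characterization $\norm[\mathrm{op}]{F}^2=\sup_{|v|=|w|=1}\langle Fv,w\rangle^2$ and writes
\[
	P(F,\delta)\;=\;\sup_{|v|=|w|=1}\frac{\langle Fv,w\rangle^2}{\delta}\quad(\delta>0),\qquad P(F,\delta)=+\infty\quad(\delta\le0)\,,
\]
so that $P$ is a supremum of compositions of the perspective function with linear maps, hence convex, and $P(F,\det F)=Z(F)$ on $\GLpz$. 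All the individual steps check out: joint convexity of $(x,\delta)\mapsto x^2/\delta$ on $\R\times(0,\infty)$, stability of convexity under precomposition with linear maps, under extension by $+\infty$ off a convex set, and under pointwise suprema. What your version buys is that it avoids any appeal to the (nontrivial, dimension-specific) closed-form convexity of $\lambda_{\max}$ and handles the nonsmoothness of the spectral norm automatically; what the cited approach buys is a concrete finite formula for $P$ rather than a supremum. Either way the lemma is established.
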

These two lemmas immediately imply the next proposition \cite{ghiba2015exponentiated}, which will play a key role in showing that isochoric, rank-one convex energies are already polyconvex.
\begin{proposition}\label{polysilhavy}
If, for given $W:\GLpz\rightarrow \mathbb{R}$, there exists a non-decreasing and convex function $\, h:[1,\infty)\rightarrow\mathbb{R}$ such that  $W=h\circ Z$, where $Z(F)=\frac{\norm[\mathrm{op}]{F}^2}{\det F}$, then $W$ is polyconvex.
\end{proposition}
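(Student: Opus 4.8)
The plan is to obtain the statement as an immediate consequence of the two preceding lemmas, so that the proof reduces to checking that their hypotheses are satisfied and then chaining them.

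First I would record that $Z$ actually takes values in $[1,\infty)$, so that the composition $h\circ Z$ is well defined and the first lemma applies. If $\lambda_1,\lambda_2>0$ denote the singular values of $F\in\GLpz$, then $\norm[\mathrm{op}]{F}^2=\max\{\lambda_1^2,\lambda_2^2\}=(\max\{\lambda_1,\lambda_2\})^2\geq\lambda_1\,\lambda_2=\det F$, hence $Z(F)=\frac{\norm[\mathrm{op}]{F}^2}{\det F}\geq1$ for all $F\in\GLpz$, with equality exactly when $\lambda_1=\lambda_2$, i.e.\ when $F\in\CSO(2)$. Next I would invoke the second lemma, which asserts precisely that $Z\colon\GLpz\to[1,\infty)$, $Z(F)=\frac{\norm[\mathrm{op}]{F}^2}{\det F}$, is polyconvex on $\GLpz$. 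Finally, taking $Y\colonequals h$ — which by assumption is non-decreasing and convex on $[1,\infty)$ — together with this $Z$, the first lemma yields that $Y\circ Z=h\circ Z=W$ is polyconvex on $\GLpz$, which is the claim.

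There is essentially no genuine obstacle here: once the two lemmas are available (their proofs being quoted from \cite{ghiba2015exponentiated}), the proposition is just their composition. The only point requiring a moment's care is the codomain bookkeeping — verifying $Z\geq1$ pointwise, so that $h$ is only ever evaluated on $[1,\infty)$ and the monotonicity-plus-convexity hypothesis of the first lemma is exactly the one in force. All of the analytic content, namely the polyconvexity of $F\mapsto\norm[\mathrm{op}]{F}^2/\det F$ and the stability of polyconvexity under post-composition with a non-decreasing convex function, has already been isolated in the lemmas, so nothing further needs to be proved.
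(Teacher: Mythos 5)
Your proposal is correct and follows exactly the route the paper takes: the paper states that the proposition is an immediate consequence of the two preceding lemmas, which is precisely the composition argument you give. The additional check that $Z\geq 1$ (so that $h$ is only evaluated on its stated domain) is a sensible piece of bookkeeping that the paper leaves implicit.
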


\subsection{A necessary condition for rank-one convexity}
We prove the following well-known necessary condition for rank-one convexity:
\begin{lemma}
\label{lemma:rocImpliesSeparateConvexity}
Let $W:\GLpz\to\R$ be objective, isotropic and rank-one convex, and let $g:\R^+\times\R^+\to\R$ denote the representation of $W$ in terms of singular values. Then $g$ is separately convex, i.e.\ the mapping $\lambda_1\mapsto g(\lambda_1,\lambda_2)$ is convex for fixed $\lambda_2\in\R^+$ and the mapping $\lambda_2\mapsto g(\lambda_1,\lambda_2)$ is convex for fixed $\lambda_1\in\R^+$.
\end{lemma}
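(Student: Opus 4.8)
The plan is to exploit the fact that rank-one convexity is a local condition that can be tested along any rank-one line segment, and that along such a segment the energy restricts to a one-dimensional convex function. To recover separate convexity of $g$, I would choose rank-one perturbations that keep one singular value fixed while varying the other, so that the convexity of the restriction translates directly into convexity of $\lambda_1\mapsto g(\lambda_1,\lambda_2)$. The symmetry $g(\lambda_1,\lambda_2)=g(\lambda_2,\lambda_1)$ then gives convexity in the second argument for free, so it suffices to handle one slot.

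Concretely, fix $\lambda_2\in\R^+$ and consider the diagonal matrix $F=\diag(\lambda_1,\lambda_2)\in\GLpz$. Take $\xi=e_1$, $\eta=e_1$, so that $F+t\,\xi\otimes\eta=\diag(\lambda_1+t,\lambda_2)$; for $t$ in a neighbourhood of $0$ this still lies in $\GLpz$ and has singular values $\{\lvert\lambda_1+t\rvert,\lambda_2\}=\{\lambda_1+t,\lambda_2\}$ for small $\lvert t\rvert$. By objectivity and isotropy, $W(F+t\,e_1\otimes e_1)=g(\lambda_1+t,\lambda_2)$. Rank-one convexity of $W$ along this segment then says precisely that $t\mapsto g(\lambda_1+t,\lambda_2)$ is convex on an interval around $0$, i.e.\ $s\mapsto g(s,\lambda_2)$ is convex locally near $s=\lambda_1$. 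Since $\lambda_1\in\R^+$ was arbitrary, $s\mapsto g(s,\lambda_2)$ is convex on all of $\R^+$ (local convexity everywhere on an interval implies convexity). Applying the same argument with the roles of the coordinates exchanged, or simply invoking $g(\lambda_1,\lambda_2)=g(\lambda_2,\lambda_1)$, yields convexity in $\lambda_2$ as well.

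The one point that requires a little care — and is, I expect, the main (mild) obstacle — is the passage from ``convex on a neighbourhood of each point'' to ``convex on all of $\R^+$'', together with the bookkeeping that the singular values of $\diag(\lambda_1+t,\lambda_2)$ really are $\lambda_1+t$ and $\lambda_2$ (rather than their absolute values or a reordering) as long as we stay in $\GLpz$ with $\lambda_1+t>0$. Both are routine: a function that is convex in a neighbourhood of every point of an interval is convex on that interval, and on the open ray where $\lambda_1+t>0$ the matrix $\diag(\lambda_1+t,\lambda_2)$ is already symmetric positive definite, so its singular values coincide with its diagonal entries. One also has to note that the rank-one segment from $F$ to $F+\xi\otimes\eta$ need not stay in $\GLpz$ for the full parameter range $[0,1]$, but Ball's definition only tests segments that do remain in $\GLpz$; restricting $t$ to a small interval around $0$ (and rescaling $\xi$) gives admissible segments, which is all we need for the local statement. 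No deeper machinery — no use of polyconvexity, Proposition \ref{polysilhavy}, or the structure of $Z$ — is involved; this lemma is purely a standard consequence of the definition of rank-one convexity specialized to diagonal, coordinate-aligned perturbations.
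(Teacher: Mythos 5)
Your proposal is correct and follows essentially the same route as the paper: the paper also perturbs a diagonal matrix by the rank-one matrix $\diag(1,0)=e_1\otimes e_1$, identifies $W(\diag(1+t,\lambda_2))$ with $g(1+t,\lambda_2)$, and concludes convexity in the first slot (the second slot following by symmetry). Your extra care about local-to-global convexity and the identification of singular values with diagonal entries is sound but not needed beyond what the paper already does implicitly.
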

\begin{proof}
For $a,b\in\R$, we define
\[
	\diag(a,b) \colonequals \begin{pmatrix}a&0\\0&b\end{pmatrix}\,.
\]
Let $\lambda_2\in\R^+$ be fixed. Since the matrix $\diag(1,0)$ has rank one, the rank-one convexity of $W$ implies that the mapping
\[
	t\mapsto W(\diag(1,\lambda_2) + t\cdot\diag(1,0)) = W(\diag(1+t,\lambda_2)) = g(1+t,\lambda_2)\,,\qquad t\in(-1,\infty)\,,
\]
is convex. Therefore, the function $g$ is convex in the first component and, for symmetry reasons, convex in the second component.
\end{proof}
\noindent Note that for an energy function $W$ of class $C^2$, the separate convexity of $g$ is equivalent to the \emph{tension-extension inequalities (TE-inequalities)}
\[
	\frac{\partial^2 g}{\partial \lambda_1^2 }\geq 0 \qquad\text{and }\qquad \frac{\partial^2 g}{\partial \lambda_2^2 }\geq 0 \qquad\text{for }\lambda_1,\lambda_2\in\R^+\,.
\]

\section{The equivalence of rank-one convexity and polyconvexity for isochoric energy functions}
\label{section:mainResult}
\subsection{The main result}
We now focus on isochoric functions $W$ on $\GLpz$, i.e.\ functions which satisfy $W(a\,F)=W(F)$ for all $F\in\GLpz$ and all $a>0$. These functions can be uniquely represented in terms of the ratio $\frac{\lambda_1}{\lambda_2}$ of the singular values of $F$.
\begin{lemma}
\label{lemma:fracRepresentation}
Let $W:\GLpz\to\R,\;F\mapsto W(F)$ be an objective, isotropic function which is additionally \emph{isochoric}, i.e.\ satisfies $W(a\,F)=W(F)$ for all $F\in\GLpz$ and all $a>0$. Then there exists a unique function $h:\R^+\to\R$ with $h(t)=h(\frac1t)$ such that $W(F)=h\big(\frac{\lambda_1}{\lambda_2}\big)$ for all $F\in\GLpz$ with singular values $\lambda_1,\lambda_2\in\R^+$.
\end{lemma}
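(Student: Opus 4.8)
The plan is to reduce everything to the already-established singular-value representation \eqref{eq:gDefinition}, using only the extra scaling invariance. First I would note that for $a\in\R^+$ the singular values of $a\,F$ are exactly $a\,\lambda_1$ and $a\,\lambda_2$, so the isochoricity $W(a\,F)=W(F)$ translates into the identity $g(a\,\lambda_1,a\,\lambda_2)=g(\lambda_1,\lambda_2)$ for all $\lambda_1,\lambda_2,a\in\R^+$. Choosing the particular scaling $a=\tfrac1{\lambda_2}$ gives
\[
	W(F) \;=\; g(\lambda_1,\lambda_2) \;=\; g\!\left(\tfrac{\lambda_1}{\lambda_2},\,1\right),
\]
which suggests defining $h:\R^+\to\R$ by $h(t)\colonequals g(t,1)$; then $W(F)=h\big(\tfrac{\lambda_1}{\lambda_2}\big)$ by construction.

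Second, I would check the functional equation $h(t)=h(\tfrac1t)$. This is exactly where the isotropy enters, through the symmetry $g(\lambda_1,\lambda_2)=g(\lambda_2,\lambda_1)$ recorded after \eqref{eq:gDefinition}: combining this with the scaling identity applied with $a=\tfrac1t$,
\[
	h(t) \;=\; g(t,1) \;=\; g(1,t) \;=\; g\!\left(\tfrac1t,\,1\right) \;=\; h\!\left(\tfrac1t\right).
\]

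Third, uniqueness: if $\widetilde h:\R^+\to\R$ also satisfies $W(F)=\widetilde h\big(\tfrac{\lambda_1}{\lambda_2}\big)$ for all $F\in\GLpz$, then evaluating on the diagonal matrices $F=\diag(t,1)$ (whose singular values are $t$ and $1$) forces $\widetilde h(t)=W(\diag(t,1))=h(t)$ for every $t\in\R^+$. The already-proved symmetry $h(t)=h(\tfrac1t)$ is what makes the ambiguity in which of the two unordered singular values is called $\lambda_1$ harmless, so that $t\mapsto h\big(\tfrac{\lambda_1}{\lambda_2}\big)$ is genuinely well defined.

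I do not expect a real obstacle here: the statement is essentially a change of variables, and all of the substantive content — that an objective, isotropic $W$ is a symmetric function of the singular values — is already supplied by \eqref{eq:gDefinition}. The only two points requiring a line of care are the elementary fact that scaling $F$ scales its singular values, and the bookkeeping of the unordered pair $\{\lambda_1,\lambda_2\}$, which is precisely reconciled by the relation $h(t)=h(\tfrac1t)$.
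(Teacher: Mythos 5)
Your proof is correct and takes essentially the same route as the paper's: both arguments normalize away the scaling freedom (the paper divides by $(\det F)^{\ahalf}$, yielding $h(t)=g\big(\sqrt{t},\tfrac{1}{\sqrt{t}}\big)$, while you set $\lambda_2=1$, yielding $h(t)=g(t,1)$ --- the same function by isochoricity), then verify $h(t)=h\big(\tfrac1t\big)$ from the symmetry of $g$ and obtain uniqueness by evaluating at $\diag(t,1)$. No gaps.
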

\begin{remark}
Note that Lemma \ref{lemma:fracRepresentation} explicitly requires $W$ to be defined on $\GLpz$ only: for functions on all of $\GL(2)$, the isotropy requirement must be extended from right-$\SO(2)$-invariance to right-$\OO(2)$-invariance in order to ensure a representation in terms of the singular values; if singular matrices are included in the domain of $W$, then $h$ is not well defined in the form stated in the lemma.
\end{remark}
\begin{proof}
Since $W$ is objective and isotropic, there exists a function $g:\R^+\times\R^+\to\R$ with $W(F)=g(\lambda_1,\lambda_2)=g(\lambda_2,\lambda_1)$ for all $F\in\GLpz$, where $\lambda_1,\lambda_2$ are the singular values of $F$. Then
\[
	W(F)=W\left(\frac{F}{\sqrt{\det F}}\right) = g\left(\frac{\lambda_1}{\sqrt{\lambda_1\,\lambda_2}},\,\frac{\lambda_2}{\sqrt{\lambda_1\,\lambda_2}}\right) = g\left(\sqrt{\frac{\lambda_1}{\lambda_2}},\,\sqrt{\frac{\lambda_2}{\lambda_1}}\right)\,,
\]
hence for $h:\R^+\to\R$ with $h(t) \colonequals g\left(\sqrt{t},\frac{1}{\sqrt{t}}\right)$ we find
\[
	h\left(\frac{\lambda_1}{\lambda_2}\right) = g\left(\sqrt{\frac{\lambda_1}{\lambda_2}},\frac{1}{\sqrt{\frac{\lambda_1}{\lambda_2}}}\right) = g\left(\sqrt{\frac{\lambda_1}{\lambda_2}},\sqrt{\frac{\lambda_2}{\lambda_1}}\right) = W(F)\,,
\]
and the symmetry of $g$ (which follows from the isotropy of $W$) implies
\[
	h(t) = g\left(\sqrt{t},\frac{1}{\sqrt{t}}\right) = g\left(\frac{1}{\sqrt{t}},\sqrt{t}\right) = g\left(\sqrt{\frac{1}{t}},\sqrt{\frac{1}{\frac1t}}\right) = h\left(\frac1t\right)\,.
\]
Finally, the uniqueness of $h$ follows directly from the equality $h(t)=W(\diag(t,1))$.
\end{proof}
We are now ready to prove our main result.
\begin{theorem}
\label{theorem:mainResult}
Let $W:\GLpz\to\R,\;\;F\mapsto W(F)$ be an objective, isotropic and isochoric function, and let \hbox{$h:\R^+\to\R$,\; $g:\R^+\times\R^+\to\R$} denote the uniquely determined functions with
\[
	W(F)=g(\lambda_1,\lambda_2)=h\left(\frac{\lambda_1}{\lambda_2}\right)=h\left(\frac{\lambda_2}{\lambda_1}\right)
\]
for all $F\in\GLpz$ with singular values $\lambda_1,\lambda_2$. Then the following are equivalent:
\begin{itemize}
\item[i)] $W$ is polyconvex.
\item[ii)] $W$ is rank-one convex,
\item[iii)] $g$ is separately convex,
\item[iv)] $h$ is convex on $\R^+$,
\item[v)] $h$ is convex and non-decreasing on $[1,\infty)$.
\end{itemize}
\end{theorem}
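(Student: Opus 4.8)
The plan is to prove the ring of implications (i)$\,\Rightarrow\,$(ii)$\,\Rightarrow\,$(iii)$\,\Rightarrow\,$(iv)$\,\Rightarrow\,$(v)$\,\Rightarrow\,$(i), which then makes all five statements equivalent. Essentially all of the analytic substance has already been packaged in Section~2: Lemma~\ref{lemma:rocImpliesSeparateConvexity} and Proposition~\ref{polysilhavy} are the load-bearing ingredients, and what remains is to transport everything through the representation $W(F)=g(\lambda_1,\lambda_2)=h(\lambda_1/\lambda_2)$ to statements about the single-variable function $h$.

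The implication (i)$\,\Rightarrow\,$(ii) is the general fact that polyconvexity implies rank-one convexity: if $W(F)=P(F,\det F)$ with $P$ convex, then along a rank-one segment $t\mapsto F+t\,\xi\otimes\eta$ the curve $t\mapsto(F+t\,\xi\otimes\eta,\det(F+t\,\xi\otimes\eta))$ is affine (the determinant is affine along rank-one lines), so $t\mapsto W(F+t\,\xi\otimes\eta)$ is convex; this uses no isochoric structure and I would merely cite it. The step (ii)$\,\Rightarrow\,$(iii) is exactly Lemma~\ref{lemma:rocImpliesSeparateConvexity}. For (iii)$\,\Rightarrow\,$(iv) the observation is the identity $h(t)=g(t,1)=W(\diag(t,1))$ recorded in the proof of Lemma~\ref{lemma:fracRepresentation}: convexity of $\lambda_1\mapsto g(\lambda_1,1)$ is literally convexity of $h$ on $\R^+$ (and conversely, since $\lambda_1\mapsto\lambda_1/\lambda_2$ is an affine bijection of $\R^+$ for fixed $\lambda_2$ and $h(t)=h(1/t)$ handles the second slot, (iii) and (iv) are in fact equivalent without further input).

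For (iv)$\,\Rightarrow\,$(v) I would exploit the functional equation $h(t)=h(1/t)$: given $1\le s<t$ one has $\frac1t<1\le s<t$, so $s=\theta\,\frac1t+(1-\theta)\,t$ for some $\theta\in[0,1]$, whence $h(s)\le\theta\,h(1/t)+(1-\theta)\,h(t)=h(t)$ by convexity and evenness; thus $h$ is non-decreasing on $[1,\infty)$ and convex there as a restriction of a convex function. Finally, (v)$\,\Rightarrow\,$(i) follows directly from Proposition~\ref{polysilhavy} once one checks that $W=h\circ Z$ with $Z(F)=\frac{\norm[\mathrm{op}]{F}^2}{\det F}$: for $F\in\GLpz$ with singular values $\lambda_1,\lambda_2$ one has $Z(F)=\frac{\max\{\lambda_1,\lambda_2\}}{\min\{\lambda_1,\lambda_2\}}\in[1,\infty)$, and using $h(t)=h(1/t)$ this gives $h(Z(F))=h(\lambda_1/\lambda_2)=W(F)$ irrespective of which singular value is larger; since the restriction of $h$ to $[1,\infty)$ is convex and non-decreasing by (v), Proposition~\ref{polysilhavy} yields polyconvexity of $W$.

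I do not expect a genuinely hard step here: the real content is hidden in Proposition~\ref{polysilhavy} (hence ultimately in the polyconvexity of $F\mapsto\norm[\mathrm{op}]{F}^2/\det F$) and in Lemma~\ref{lemma:rocImpliesSeparateConvexity}. The one place to be slightly careful is (iv)$\,\Rightarrow\,$(v): it is the evenness $h(t)=h(1/t)$ — equivalently the isotropy of $W$ — that forces monotonicity of $h$ on $[1,\infty)$; convexity alone would not, and this is precisely the feature that makes Proposition~\ref{polysilhavy} applicable. The conceptual heart of the argument is thus the elementary identity $W(F)=h\!\left(\frac{\norm[\mathrm{op}]{F}^2}{\det F}\right)$, which exhibits an arbitrary objective, isotropic and isochoric energy as a monotone function of the single polyconvex invariant $\norm[\mathrm{op}]{F}^2/\det F$.
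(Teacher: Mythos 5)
Your proposal is correct and follows essentially the same route as the paper: the same ring of implications, with (ii)$\Rightarrow$(iii) from Lemma \ref{lemma:rocImpliesSeparateConvexity}, the identical convex-combination argument $t_1=s\,\tfrac{1}{t_2}+(1-s)\,t_2$ for (iv)$\Rightarrow$(v), and Proposition \ref{polysilhavy} applied to $Z(F)=\norm[\mathrm{op}]{F}^2/\det F$ for the closing step. Your added remarks (the explicit affineness of $\det$ along rank-one lines for (i)$\Rightarrow$(ii), and the observation that (iii) and (iv) are directly equivalent) are correct but not needed for the chain.
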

\begin{proof}
The implication i) $\Rightarrow$ ii) is well-known to hold in general, whereas the implication ii) $\Rightarrow$ iii) is stated in Lemma \ref{lemma:rocImpliesSeparateConvexity}.\\
iii) $\Rightarrow$ iv):
If $g$ is separately convex, then the mapping
\[
	\lambda_1 \mapsto g(\lambda_1,1) = h(\lambda_1)
\]
is convex, thus $h$ is convex on $\R^+$.\\
iv) $\Rightarrow$ v):
Assume that $h$ is convex on $\R^+$. Then, of course, $h$ is also convex on $[1,\infty)$, and it remains to show the monotonicity of $h$. Let $1\leq t_1<t_2$. Then $\frac{1}{t_2}<1\leq t_1<t_2$, i.e.\ $t_1$ lies in the convex hull of $\frac{1}{t_2}$ and $t_2$. But then $t_1 = s\,\frac{1}{t_2} + (1-s)\,t_2$ for some $s\in(0,1)$, and thus the convexity of $h$ on $\R^+$ implies that
\[
	h(t_1) = h\left(s\,\frac{1}{t_2} + (1-s)\,t_2\right) \leq s\,h\!\left(\frac{1}{t_2}\right) + (1-s)\,h(t_2) = s\,h(t_2) + (1-s)\,h(t_2) = h(t_2)\,,
\]
hence $h$ is non-decreasing on $[1,\infty)$.\\
iv) $\Rightarrow$ v):
Assume that $h$ is convex and non-decreasing on $[1,\infty)$. Then we can apply Proposition \ref{polysilhavy}: since the mapping
\[
	F \mapsto \frac{\norm[\mathrm{op}]{F}^2}{\det F} = \frac{\max\{\lambda_1^2,\lambda_2^2\}}{\lambda_1\,\lambda_2} \in [1,\infty)
\]
is polyconvex \cite{ghiba2015exponentiated} and $h$ is convex and non-decreasing on $[1,\infty)$, the mapping
\[
	F \mapsto h\left(\frac{\max\{\lambda_1^2,\lambda_2^2\}}{\lambda_1\,\lambda_2}\right) = h\left(\frac{\lambda_1}{\lambda_2}\right) = W(F)
\]
is polyconvex as well.
\end{proof}
If the function $h$ is continuously differentiable, then the criteria in Theorem \ref{theorem:mainResult} can be simplified even further.
\begin{corollary}
\label{cor:mainResultForDifferentiableFunctions}
Let $W:\GLpz\to\R$ be an objective, isotropic and isochoric function, and let $h:\R^+\to\R$ denote the uniquely determined function with $W(F)=h\big(\frac{\lambda_1}{\lambda_2}\big)$ for all $F\in\GLpz$ with singular values $\lambda_1,\lambda_2$. If $h\in C^1(\R^+)$, then $W$ is polyconvex if and only if $h$ is convex on $[1,\infty)$.
\end{corollary}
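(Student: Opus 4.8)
The plan is to deduce Corollary \ref{cor:mainResultForDifferentiableFunctions} directly from Theorem \ref{theorem:mainResult}, using the extra $C^1$ regularity to collapse the list of equivalent conditions. By Theorem \ref{theorem:mainResult}, $W$ is polyconvex if and only if condition v) holds, i.e.\ $h$ is convex \emph{and} non-decreasing on $[1,\infty)$. So the only thing to prove is that, when $h\in C^1(\R^+)$ satisfies $h(t)=h(\frac1t)$, convexity of $h$ on $[1,\infty)$ already forces $h$ to be non-decreasing on $[1,\infty)$; the reverse implication (v) $\Rightarrow$ convexity on $[1,\infty)$) is trivial. Hence convexity on $[1,\infty)$ becomes equivalent to the full condition v), and the corollary follows.

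First I would exploit the symmetry relation $h(t)=h(1/t)$ together with differentiability. Writing $t\mapsto 1/t$ and differentiating gives $h'(t) = -\frac{1}{t^2}\,h'(1/t)$ for all $t\in\R^+$; evaluating at $t=1$ yields $h'(1) = -h'(1)$, hence $h'(1)=0$. This is the key pointwise fact: the stationarity of $h$ at $t=1$ is automatic from the symmetry, and it is exactly what is missing in the purely continuous setting (where $h$ could a priori be strictly decreasing on $[1,\infty)$ while still being convex there, e.g.\ a convex decreasing function — but then it could not match a symmetric convex function smoothly at $1$).

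Next, assuming $h$ is convex on $[1,\infty)$, its derivative $h'$ is non-decreasing on $[1,\infty)$. Combined with $h'(1)=0$, this gives $h'(t)\geq h'(1)=0$ for all $t\geq 1$, so $h$ is non-decreasing on $[1,\infty)$, which is condition v). Thus i)–v) of Theorem \ref{theorem:mainResult} all hold, and in particular $W$ is polyconvex. Conversely, if $W$ is polyconvex then by Theorem \ref{theorem:mainResult} $h$ is convex on $[1,\infty)$ (indeed even on all of $\R^+$). This establishes the stated equivalence.

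I expect essentially no obstacle here: the whole argument rests on the one-line computation $h'(1)=0$ coming from $C^1$ regularity plus the functional equation $h(t)=h(1/t)$, after which monotonicity on $[1,\infty)$ is an immediate consequence of convexity. The only point requiring a little care is making sure the differentiation of the identity $h(t)=h(1/t)$ is justified on all of $\R^+$ (it is, since $h\in C^1(\R^+)$ and $t\mapsto 1/t$ is smooth on $\R^+$), and then invoking Theorem \ref{theorem:mainResult} to transfer the conclusion back to $W$.
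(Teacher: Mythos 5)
Your proposal is correct and follows essentially the same route as the paper: differentiate the symmetry relation $h(t)=h(1/t)$ to obtain $h'(1)=0$, use convexity of $h$ on $[1,\infty)$ to conclude $h'$ is non-decreasing there, hence $h'\geq 0$ and $h$ is non-decreasing on $[1,\infty)$, and then invoke criterion v) of Theorem \ref{theorem:mainResult}. No gaps.
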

\begin{proof}
We only need to show that the stated criterion is sufficient for the polyconvexity of $W$. Assume therefore that $h$ is convex on $[1,\infty)$. Taking the derivative on both sides of the equality $h(t)=h\big(\frac1t\big)$, which holds for all $t\in\R^+$, yields
\[
	h^{\prime}(t) = -\frac{1}{t^2}\cdot h^{\prime}\left(\frac1t\right)\,.
\]
In particular, $h^{\prime}(1)=-h^{\prime}(1)$ and thus $h^{\prime}(1)=0$. Since the convexity of $h$ implies the monotonicity of $h^{\prime}$ on $[1,\infty)$, we find $h^{\prime}(t)\geq0$ for all $t\in[1,\infty)$. This means that $h$ is non-decreasing on $[1,\infty)$, and applying criterion v) in Theorem \ref{theorem:mainResult} yields the polyconvexity of $W$.
\end{proof}

\section{Criteria for rank-one convexity and polyconvexity in terms of different energy representations}
\label{section:criteriaDifferentRepresentations}
\subsection{Energy functions in terms of the logarithmic strain}
We will now assume that the function $W$ is of class $C^2$. While the criterion $h^{\prime\prime}(t)\geq0$ for all $t\in[1,\infty)$ in Corollary \ref{cor:mainResultForDifferentiableFunctions} is easy to state, isochoric elastic energy functions in nonlinear hyperelasticity are typically not immediately given in terms of the quantity $\frac{\lambda_1}{\lambda_2}$. We therefore consider different representations of such functions in our search for easily verifiable polyconvexity criteria.
\begin{lemma}
\label{lemma:logSquaredRepresentation}
Let $W:\GLpz\to\R$ be objective, isotropic and isochoric. Then there exist unique functions $f,\ftilde:[0,\infty)\to\R$ such that
\begin{align*}
	\text{i)} \quad W(F) &= f\left(\log^2\frac{\lambda_1}{\lambda_2}\right)\,,\\
	\text{ii)} \quad W(F) &= \ftilde(\norm{\dev_2\log U}^2)
\end{align*}
for all $F\in\GLpz$, where $\lambda_1,\lambda_2$ denote the singular values of $F$,\; $U=\sqrt{F^TF}$ is the positive definite symmetric polar factor in the right polar decomposition of $F$, $\dev_2 X = X - \frac{\tr(X)}{2}\cdot\id$ is the deviatoric part of $X\in\R^{2\times 2}$, $\log$ denotes the principal matrix logarithm on $\PSym(2)$ and $\norm{\,.\,}$ is the Frobenius matrix norm.
\end{lemma}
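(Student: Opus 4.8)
The plan is to obtain both representations from the ratio representation $W(F)=h\!\left(\frac{\lambda_1}{\lambda_2}\right)$ with $h(t)=h\!\left(\frac1t\right)$ supplied by Lemma~\ref{lemma:fracRepresentation}, exploiting that $\log^2\frac{\lambda_1}{\lambda_2}$ and $\norm{\dev_2\log U}^2$ are both symmetric functions of $\frac{\lambda_1}{\lambda_2}$ (invariant under $\lambda_1\leftrightarrow\lambda_2$), so that they factor through $h$ after a scalar change of variables.

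For part i), the key point is that $t\mapsto\log^2 t=(\log t)^2$ restricts to a strictly increasing bijection of $[1,\infty)$ onto $[0,\infty)$ with inverse $s\mapsto e^{\sqrt s}$. I would define $f\colon[0,\infty)\to\R$ by $f(s)\colonequals h(e^{\sqrt s})$ and check that $f\!\left(\log^2 t\right)=h\!\left(e^{|\log t|}\right)=h(t)$ for every $t\in\R^+$: for $t\geq1$ this is immediate, and for $0<t<1$ it follows from $e^{|\log t|}=\frac1t$ together with the symmetry $h\!\left(\frac1t\right)=h(t)$. This is exactly the place where the symmetry of $h$ is used, and it is what makes $f\circ\log^2$ reproduce $h$ on all of $\R^+$ rather than only on $[1,\infty)$. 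Hence $W(F)=h\!\left(\frac{\lambda_1}{\lambda_2}\right)=f\!\left(\log^2\frac{\lambda_1}{\lambda_2}\right)$, and uniqueness follows by evaluating on diagonal test matrices, $f(s)=W(\diag(e^{\sqrt s},1))$.

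For part ii), I would first record the elementary spectral fact that the eigenvalues of $U=\sqrt{F^TF}$ are precisely the singular values $\lambda_1,\lambda_2$ of $F$, so $\log U$ has eigenvalues $\log\lambda_1,\log\lambda_2$, whence $\tr(\log U)=\log\lambda_1+\log\lambda_2$ and the symmetric matrix $\dev_2\log U=\log U-\frac12\tr(\log U)\cdot\id$ has eigenvalues $\pm\frac12\log\frac{\lambda_1}{\lambda_2}$. Therefore $\norm{\dev_2\log U}^2=\frac12\log^2\frac{\lambda_1}{\lambda_2}$, and setting $\ftilde(s)\colonequals f(2s)$ gives $\ftilde\!\left(\norm{\dev_2\log U}^2\right)=f\!\left(\log^2\frac{\lambda_1}{\lambda_2}\right)=W(F)$ by part i). Uniqueness follows as before by testing on $\diag(e^{\sqrt{2s}},1)$, for which $\norm{\dev_2\log U}^2=s$.

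There is no substantial obstacle: the only steps needing care are the well-definedness check for $f$ (the interplay of $|\log t|$ with the symmetry of $h$) and the short computation of the spectrum of $\dev_2\log U$. One could alternatively define $\ftilde$ directly from $h$ via $\ftilde(s)=h(e^{\sqrt{2s}})$ and verify $\ftilde\!\left(\norm{\dev_2\log U}^2\right)=W(F)$ without passing through $f$, but routing part ii) through part i) is cleaner and makes the factor $2$ transparent.
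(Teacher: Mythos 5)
Your proposal is correct and follows essentially the same route as the paper: define $f(s)=h(e^{\sqrt{s}})$ from the ratio representation of Lemma \ref{lemma:fracRepresentation}, use the symmetry $h(t)=h(1/t)$ to handle $e^{|\log t|}$, obtain uniqueness by evaluating on diagonal matrices, and pass to $\ftilde(s)=f(2s)$ via the identity $\norm{\dev_2\log U}^2=\tfrac12\log^2\frac{\lambda_1}{\lambda_2}$. The only difference is cosmetic: you prove that identity by a short spectral computation, whereas the paper cites it from the literature.
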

\begin{proof}
i):\; Let us first recall that from Lemma \ref{lemma:fracRepresentation} that there exists a unique function $h:\mathbb{R}^+\rightarrow\mathbb{R}^+$ such that $W(F)=h\big(\frac{\lambda_1}{\lambda_2}\big)$ for all $F\in\GLpz$ with singular values $\lambda_1,\lambda_2$.
Let $f(\theta)=h(e^{\sqrt{\theta}})$ for $\theta>0$. Since
\[
	\sqrt{\log^2\frac{\lambda_1}{\lambda_2}} = \left|\log\frac{\lambda_1}{\lambda_2}\right| = \log\,\frac{\max\{\lambda_1,\lambda_2\}}{\min\{\lambda_1,\lambda_2\}}\,,
\]
we find
\[
	f\left(\log^2\frac{\lambda_1}{\lambda_2}\right) = h\left(e^{\sqrt{\log^2\frac{\lambda_1}{\lambda_2}}}\right) = h\left(e^{\log\frac{\max\{\lambda_1,\lambda_2\}}{\min\{\lambda_1,\lambda_2\}}}\right) = h\left(\frac{\max\{\lambda_1,\lambda_2\}}{\min\{\lambda_1,\lambda_2\}}\right) = h\left(\frac{\lambda_1}{\lambda_2}\right) = W(F)
\]
for all $F\in\GLpz$ with singular values $\lambda_1,\lambda_2$. To show the uniqueness of $f$, we simply note that
\[
	f(\theta) = f\left(\log^2\left(\frac{e^{\sqrt{\theta}}}{1}\right)\right) = W(\diag(e^{\sqrt{\theta}},\,1))
\]
for all $\theta>0$.
ii):\; It has been previously shown \cite{NeffGhibaLankeit} that
\[
	\|\dev_2\log U\|^2=\frac{1}{2}\,\log^2 \frac{\lambda_1}{\lambda_2}\,.
\]
The equality $W(F) = \ftilde(\norm{\dev_2\log U}^2)$ is therefore satisfied for all $F\in\GLpz$ if and only if $\ftilde(t) = f(2\,t)$, where $f$ is given by i).
\end{proof}
Note carefully that for $n>2$, not every objective, isotropic and isochoric energy $W:\GLp(n)\to\R$ can be written in terms of $\norm{\dev_n\log U}^2$ in the way Lemma \ref{lemma:logSquaredRepresentation} states for $n=2$. However, there always exists a function $\widehat{W}:\Sym(n)\to\R$ such that $W(F)=\widehat{W}(\dev_n\log U)$ for all $F\in\GLp(n)$ with $U=\sqrt{F^TF}$.\\[1em]
We can now state Theorem \ref{theorem:mainResult} in terms of the functions $f,\ftilde$ as defined in Lemma \ref{lemma:logSquaredRepresentation}.

\begin{proposition}
\label{prop:mainResultInTermsOfLogSquared}
Let $W:\GLpz\to\R,\;F\mapsto W(F)$ be an objective, isotropic and isochoric function and let $f,\ftilde:[0,\infty)\to\R$ denote the uniquely determined functions with
\[
	W(F)=\ftilde(\norm{\dev_2\log U}^2)=f\left(\log^2\frac{\lambda_1}{\lambda_2}\right)
\]
for all $F\in\GLpz$ with singular values $\lambda_1,\lambda_2$. If $f,\ftilde\in C^2([0,\infty))$, then the following are equivalent:
\begin{itemize}
\item[i)] $W$ is polyconvex,
\item[ii)] $W$ is rank-one convex,
\item[iii)] $2\,\theta\,f^{\prime\prime}(\theta)+ (1-\sqrt{\theta})\,f^{\prime}(\theta)\geq 0$ \quad for all $\theta\in(0,\infty)$,
\item[iv)] $2\,\eta\,\ftilde^{\prime\prime}(\eta)+ (1-\sqrt{2\,\eta})\,\ftilde^{\prime}(\eta)\geq 0$ \quad for all $\eta\in(0,\infty)$.
\end{itemize}
\end{proposition}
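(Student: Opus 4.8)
The plan is to deduce all four equivalences from Corollary~\ref{cor:mainResultForDifferentiableFunctions} by a single change of variables, so that conditions iii) and iv) become mere reformulations of the criterion ``$h$ is convex on $[1,\infty)$''. First I would check that the regularity hypothesis transfers to $h$. By Lemma~\ref{lemma:fracRepresentation} and the proof of Lemma~\ref{lemma:logSquaredRepresentation}.i) we have $h(t)=W(\diag(t,1))=f(\log^2 t)$ for all $t\in\R^+$; since $t\mapsto\log^2 t$ is $C^\infty$ on $\R^+$ (with value $0$ at $t=1$) and $f\in C^2([0,\infty))$, in particular at $0$, the chain rule gives $h\in C^2(\R^+)\subset C^1(\R^+)$. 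Hence Corollary~\ref{cor:mainResultForDifferentiableFunctions} applies and yields i) $\Leftrightarrow$ ii) $\Leftrightarrow$ ``$h$ is convex on $[1,\infty)$''; and because $h$ is $C^2$ near $[1,\infty)$, the last statement is equivalent to $h''(t)\ge 0$ for all $t\in(1,\infty)$.

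Next I would carry out the substitution explicitly. On $[1,\infty)$ one has $\sqrt{\log^2 t}=\log t$, so $\theta\colonequals\log^2 t$ is a $C^2$-diffeomorphism of $(1,\infty)$ onto $(0,\infty)$ with $\log t=\sqrt\theta$. Differentiating $h(t)=f(\log^2 t)$ twice, using $\frac{d}{dt}\log^2 t=\frac{2\log t}{t}$ and $\frac{d}{dt}\frac{2\log t}{t}=\frac{2(1-\log t)}{t^2}$, gives
\[
	h''(t)=\frac{2}{t^2}\Bigl[\,2\,\theta\,f''(\theta)+(1-\sqrt{\theta})\,f'(\theta)\,\Bigr],\qquad \theta=\log^2 t .
\]
Since $\frac{2}{t^2}>0$ and $t\mapsto\theta$ maps $(1,\infty)$ bijectively onto $(0,\infty)$, the inequality $h''\ge 0$ on $(1,\infty)$ holds if and only if iii) holds. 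This is the heart of the argument, and I do not expect a serious obstacle beyond the bookkeeping in this second derivative; the only delicate point is that $\sqrt{\log^2 t}=+\log t$ (not $|\log t|$) precisely because we restrict to $t\ge 1$, which is exactly why $[1,\infty)$ — and not all of $\R^+$ — is the correct domain of the criterion.

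Finally I would translate iii) into iv). By Lemma~\ref{lemma:logSquaredRepresentation}.ii) we have $\ftilde(\eta)=f(2\,\eta)$, hence $f'(\theta)=\frac12\,\ftilde'(\theta/2)$ and $f''(\theta)=\frac14\,\ftilde''(\theta/2)$; substituting this together with $\eta=\theta/2$ into the bracket of iii) turns it into $\frac12\bigl[\,2\,\eta\,\ftilde''(\eta)+(1-\sqrt{2\,\eta})\,\ftilde'(\eta)\,\bigr]$. As $\theta\mapsto\theta/2$ is a bijection of $(0,\infty)$ onto itself and $\frac12>0$, conditions iii) and iv) are equivalent. Collecting the chain i) $\Leftrightarrow$ ii) $\Leftrightarrow$ ``$h$ convex on $[1,\infty)$'' $\Leftrightarrow$ iii) $\Leftrightarrow$ iv) then finishes the proof.
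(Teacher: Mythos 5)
Your proposal is correct and follows essentially the same route as the paper: reduce to the convexity of $h(t)=f(\log^2 t)$ on $[1,\infty)$ via Corollary \ref{cor:mainResultForDifferentiableFunctions}, compute $h''(t)=\frac{2}{t^2}\bigl[2\,\theta\,f''(\theta)+(1-\sqrt{\theta})\,f'(\theta)\bigr]$ under the bijective substitution $\theta=\log^2 t$ between $(1,\infty)$ and $(0,\infty)$, and then pass to $\ftilde$ via $\ftilde(\eta)=f(2\,\eta)$. The computations check out, and your remark that $\sqrt{\log^2 t}=\log t$ only because $t\geq 1$ is exactly the point the paper exploits.
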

\begin{proof}
For $h:\R^+\to\R$ with $h(t) = f(\log^2 t)$ we find
\[
	h\left(\frac{\lambda_1}{\lambda_2}\right) = f\left(\log^2\frac{\lambda_1}{\lambda_2}\right) = W(F)
\]
for all $F\in\GLpz$ with singular values $\lambda_1,\lambda_2$. If $f\in C^2([0,\infty))$, then $h\in C^2(\R^+)$, thus we can apply Corollary \ref{cor:mainResultForDifferentiableFunctions} to find that $W$ is polyconvex (and, equivalently, rank-one convex) if and only if $h$ is convex on $[1,\infty)$. Since $h^{\prime\prime}$ is continuous on $\R^+$, this convexity of $h$ is equivalent to $h^{\prime\prime}(t)\geq0$ for all $t\in(1,\infty)$.
We compute
\begin{equation}
\label{eq:hPrimeITOf}
	h^{\prime}(t) = 2\,f^{\prime}(\log^2t)\cdot\frac{\log t}{t}
\end{equation}
as well as
\begin{align*}
	h^{\prime\prime}(t) &= 4\,f^{\prime\prime}(\log^2t)\cdot\frac{\log^2t}{t^2} - 2\,f^{\prime}(\log^2t)\cdot \frac{\log t}{t^2} + 2\,f^{\prime}(\log^2t)\cdot \frac{1}{t^2}\\
	&= \frac{2}{t^2}\,\Big(2\,(\log^2t)\, f^{\prime\prime}\big(\log^2t\big) + (1-\log t)\, f^{\prime}\big(\log^2t\big)\Big)\,.
\end{align*}
Writing $t>1$ as $t=e^{\sqrt{\theta}}$ with $\theta>0$ we find
\[
	h^{\prime\prime}(t) = \frac{2}{e^{2\sqrt{\theta}}}\,\Big(2\,\theta\,f^{\prime\prime}(\theta) + (1-\sqrt{\theta})\,f^{\prime}(\theta)\Big)\,.
\]
Since the mapping $\theta\to e^{\sqrt{\theta}}$ is bijective from $(0,\infty)$ to $(1,\infty)$, the condition
\begin{equation}
\label{eq:convexityInTermsOfFrac}
	h^{\prime\prime}(t)\geq0 \quad\text{ for all } t\in(1,\infty)
\end{equation}
is therefore equivalent to
\begin{equation}
\label{eq:convexityInTermsOfLogSquaredFrac}
	2\,\theta\,f^{\prime\prime}(\theta) + (1-\sqrt{\theta})\,f^{\prime}(\theta)\geq0 \quad\text{ for all } \theta\in(0,\infty)\,,
\end{equation}
which is exactly criterion iii).\\
It remains to show that iii) and iv) are equivalent. Since $\ftilde(\eta)=f(2\,\eta)$ \;(see Lemma \ref{lemma:logSquaredRepresentation}), we find
\[
	2\,\eta\,\ftilde^{\prime\prime}(\eta)+ \left(1-\sqrt{2\,\eta}\,\ftilde^{\prime}(\eta)\right) = 2\,\left[2\,(2\,\eta)\,f^{\prime\prime}(2\,\eta)+ \left(1-\sqrt{(2\,\eta)}\right)\,f^{\prime}(2\,\eta)\right]\,.
\]
Thus iv) is satisfied for all $\eta\in\R^+$ if and only if iii) is satisfied for all $\theta=2\,\eta\in\R^+$.
\end{proof}
\noindent In addition to criterion iii) in Proposition \ref{prop:mainResultInTermsOfLogSquared}, the polyconvexity of $W$ also implies the monotonicity of $f$:
\begin{corollary}
\label{cor:polyconvexityImpliesMonotonicityITOf}
Under the assumptions of Proposition \ref{prop:mainResultInTermsOfLogSquared}, if $W$ is polyconvex (or, equivalently, rank-one convex), then $f^{\prime}(\theta)\geq0$ for all $\theta>0$.
\end{corollary}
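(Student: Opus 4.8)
The plan is to read off the conclusion from the structure already exposed in the proof of Proposition \ref{prop:mainResultInTermsOfLogSquared}. Recall that there one introduces $h:\R^+\to\R$ with $h(t)=f(\log^2 t)$, so that $W(F)=h\big(\frac{\lambda_1}{\lambda_2}\big)$, and shows (via Corollary \ref{cor:mainResultForDifferentiableFunctions}) that polyconvexity of $W$ — equivalently, rank-one convexity — holds if and only if $h$ is convex on $[1,\infty)$. Since $f\in C^2([0,\infty))$ we have $h\in C^2(\R^+)$, so this convexity is equivalent to $h^{\prime\prime}\geq 0$ on $(1,\infty)$, and in particular $h^{\prime}$ is non-decreasing on $[1,\infty)$.

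Next I would recall the computation already carried out in the proof of Corollary \ref{cor:mainResultForDifferentiableFunctions}: differentiating the identity $h(t)=h\big(\frac1t\big)$ (valid for all $t\in\R^+$ by Lemma \ref{lemma:fracRepresentation}) and evaluating at $t=1$ gives $h^{\prime}(1)=-h^{\prime}(1)$, hence $h^{\prime}(1)=0$. Combining this with the monotonicity of $h^{\prime}$ on $[1,\infty)$ just obtained yields $h^{\prime}(t)\geq h^{\prime}(1)=0$ for all $t\in[1,\infty)$.

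Finally I would translate this back to $f$ using formula \eqref{eq:hPrimeITOf}, namely $h^{\prime}(t)=2\,f^{\prime}(\log^2 t)\cdot\frac{\log t}{t}$. For every $t>1$ the factor $\frac{\log t}{t}$ is strictly positive, so $h^{\prime}(t)\geq 0$ forces $f^{\prime}(\log^2 t)\geq 0$. Since the map $t\mapsto\log^2 t$ is a bijection from $(1,\infty)$ onto $(0,\infty)$, it follows that $f^{\prime}(\theta)\geq 0$ for all $\theta\in(0,\infty)$, which is the claim.

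There is no real obstacle here: the corollary is a direct bookkeeping consequence of two facts already established earlier — the equivalence "polyconvex $\Leftrightarrow$ $h$ convex on $[1,\infty)$" and the vanishing $h^{\prime}(1)=0$. The only point to state carefully is that these two facts together give monotonicity of $h$ on $[1,\infty)$, which is exactly the content transported to $f$ through the change of variables $\theta=\log^2 t$; the positivity of $\frac{\log t}{t}$ on $(1,\infty)$ is what makes the sign survive the substitution.
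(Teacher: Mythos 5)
Your argument is correct and follows essentially the same route as the paper: both proofs reduce the claim to the non-decreasing monotonicity of $h=f\circ\log^2$ on $[1,\infty)$ and then read off $f^{\prime}\geq0$ from the identity $h^{\prime}(t)=2\,f^{\prime}(\log^2 t)\cdot\frac{\log t}{t}$ together with the positivity of $\frac{\log t}{t}$ on $(1,\infty)$. The only cosmetic difference is that the paper obtains the monotonicity of $h$ by citing criterion v) of Theorem \ref{theorem:mainResult} directly, whereas you re-derive it from convexity plus $h^{\prime}(1)=0$ — which is precisely the argument already used in the proof of Corollary \ref{cor:mainResultForDifferentiableFunctions}.
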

\begin{proof}
According to Theorem \ref{theorem:mainResult}, the polyconvexity of $W$ implies that $h=f\circ\log^2$ is non-decreasing on $[1,\infty)$. Then
\[
	0 \leq h^{\prime}(t) = 2\,f^{\prime}(\log^2t)\cdot\frac{\log t}{t}
\]
for all $t>1$ and thus $0 \leq f^{\prime}(\log^2t)$ for all $t>1$, which immediately implies $f^{\prime}(\theta)\geq0$ for all $\theta>0$.
\end{proof}

\subsection{Energy functions in terms of the distortion function}
We now consider the representation of an isochoric energy $W(F)$ in terms of $\K(F)=\frac12\,\frac{\norm{F}^2}{\det F}$, where $\norm{\,.\,}$ denotes the Frobenius matrix norm; the mapping $\K$ is also known as the (planar) \emph{distortion function} \cite{iwaniec2009} or \emph{outer distortion} \cite[eq.\ (14)]{iwaniec2011invitation}. Note that $\K\geq1$ and that, for $F\in\GLpz$, $\K(F)=1$ if and only if $F$ is conformal, i.e.\ if $F=a\cdot R$ with $a\in\R^+$ and $R\in\SO(2)$.
In the two-dimensional case, every objective, isotropic and isochoric (i.e.\ conformally invariant) energy can be written in terms of $\K$.
\begin{lemma}
\label{lemma:Krepresentation}
Let $W:\GLpz\to\R$ be objective, isotropic and isochoric. Then there exists a unique function $\z:[1,\infty)\to\R$ with
\[
	W(F) = \z\left(\K(F)\right) = \z\left(\frac12\,\frac{\norm{F}^2}{\det F}\right)
\]
for all $F\in\GLpz$.
\end{lemma}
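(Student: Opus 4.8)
The plan is to reduce this to the already-established representation in terms of singular values, since $\K(F)$ depends only on the ratio $\frac{\lambda_1}{\lambda_2}$. First I would invoke Lemma \ref{lemma:fracRepresentation} to obtain the unique function $h:\R^+\to\R$ with $h(t)=h(\frac1t)$ and $W(F)=h\big(\frac{\lambda_1}{\lambda_2}\big)$ for all $F\in\GLpz$ with singular values $\lambda_1,\lambda_2$. Next I would compute $\K(F)$ in terms of the singular values: since $\norm{F}^2=\lambda_1^2+\lambda_2^2$ and $\det F=\lambda_1\lambda_2$, we get
\[
	\K(F)=\frac12\,\frac{\lambda_1^2+\lambda_2^2}{\lambda_1\,\lambda_2}=\frac12\left(\frac{\lambda_1}{\lambda_2}+\frac{\lambda_2}{\lambda_1}\right)\,.
\]
Thus $\K(F)=\psi\big(\frac{\lambda_1}{\lambda_2}\big)$ where $\psi(t)=\frac12(t+\frac1t)$.

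The key observation is that $\psi$ maps $[1,\infty)$ bijectively onto $[1,\infty)$, is strictly increasing there, and satisfies $\psi(t)=\psi(\frac1t)$. Hence $\psi$ restricted to $[1,\infty)$ has a well-defined inverse $\psi^{-1}:[1,\infty)\to[1,\infty)$ given explicitly by $\psi^{-1}(s)=s+\sqrt{s^2-1}$. I would then define $\z:[1,\infty)\to\R$ by $\z(s)\colonequals h\big(\psi^{-1}(s)\big)$, and verify that $W(F)=\z(\K(F))$: given $F\in\GLpz$ with singular values $\lambda_1,\lambda_2$, write $t=\frac{\max\{\lambda_1,\lambda_2\}}{\min\{\lambda_1,\lambda_2\}}\geq1$, so that $\K(F)=\psi(t)$ (using $\psi(t)=\psi(\frac1t)$) and $\psi^{-1}(\K(F))=t$, whence $\z(\K(F))=h(t)=h\big(\frac{\lambda_1}{\lambda_2}\big)=W(F)$, again using $h(t)=h(\frac1t)$.

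For uniqueness, I would note that any such $\z$ must satisfy $\z(s)=W(\diag(\psi^{-1}(s),1))$ for all $s\in[1,\infty)$, since $\diag(\psi^{-1}(s),1)$ has singular values $\psi^{-1}(s)$ and $1$ with ratio $\psi^{-1}(s)$, giving $\K=\psi(\psi^{-1}(s))=s$; this pins down $\z$ completely. I do not anticipate a serious obstacle here — the argument is essentially a change of variables — but the one point requiring care is the interplay of the two symmetries $h(t)=h(\frac1t)$ and $\psi(t)=\psi(\frac1t)$, which together ensure that the composition is well-defined regardless of whether $\lambda_1\geq\lambda_2$ or $\lambda_2\geq\lambda_1$; it is worth writing this out explicitly rather than sweeping it under the rug.
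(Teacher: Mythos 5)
Your proposal is correct and follows essentially the same route as the paper's proof: reduce to the representation $W(F)=h\bigl(\frac{\lambda_1}{\lambda_2}\bigr)$ from Lemma \ref{lemma:fracRepresentation}, observe $\K(F)=\frac12\bigl(\frac{\lambda_1}{\lambda_2}+\frac{\lambda_2}{\lambda_1}\bigr)$, invert $t\mapsto\frac12(t+\frac1t)$ on $[1,\infty)$ via $s\mapsto s+\sqrt{s^2-1}$, and set $\z=h\circ\psi^{-1}$, with uniqueness pinned down by evaluating $W$ on $\diag\bigl(s+\sqrt{s^2-1},\,1\bigr)$. Your explicit attention to the interplay of the symmetries $h(t)=h(\frac1t)$ and $\psi(t)=\psi(\frac1t)$ matches the paper's observation that $q\bigl(\frac12(t+\frac1t)\bigr)=\max\{t,\frac1t\}$ for all $t>0$.
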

\begin{proof}
It can easily be seen that the function $p:[1,\infty)\to[1,\infty)$ with $p(t)=\frac12\,(t+\frac1t)$ is bijective, and that its inverse is given by
\[
	q(s)=p^{-1}(s) = y+\sqrt{y^2-1}\,.
\]
Then $q(\frac12(t+\frac1t))=t$ for all $t\in[1,\infty)$, while for $t\in(0,1)$ we find
\[
	q\Big(\frac12\left(t+\frac1t\right)\Big) = q\Big(\,\frac12\left(\smash{\,\overbrace{\frac1t}^{>1}}+\frac{1}{\frac1t}\,\right)\,\Big) = \frac1t\,.\vphantom{\overbrace{\frac1t}^{>1}}
\]
Therefore $q(\frac12(t+\frac1t)) = \max\{t,\frac1t\}$ for all $t\in\R^+=(0,\infty)$.

According to Lemma \ref{lemma:fracRepresentation}, there exists a unique function $h:\mathbb{R}^+\rightarrow\mathbb{R}^+$ such that
$W(F)=h\big(\frac{\lambda_1}{\lambda_2}\big) = h\big(\frac{\lambda_2}{\lambda_1}\big)$ %
for all $F\in\GLpz$ with singular values $\lambda_1,\lambda_2$. Then the function $z\colonequals h\circ q$ has the desired property: since
\[
	\frac12\,\frac{\norm{F}^2}{\det F} = \frac12\,\frac{\lambda_1^2+\lambda_2^2}{\lambda_1\,\lambda_2} = \frac12\left(\frac{\lambda_1}{\lambda_2} + \frac{\lambda_2}{\lambda_1}\right)\,,
\]
we find
\[
	z\bigg(\frac12\,\frac{\norm{F}^2}{\det F}\bigg) = h\Big(q\bigg(\frac12\,\frac{\norm{F}^2}{\det F}\bigg)\Big) = h\Big(q\bigg(\frac12\bigg(\frac{\lambda_1}{\lambda_2} + \frac{1}{\frac{\lambda_1}{\lambda_2}}\bigg)\bigg)\Big) = h\bigg(\max\left\{\frac{\lambda_1}{\lambda_2}\,,\; \frac{\lambda_2}{\lambda_1}\right\}\bigg) = W(F)\,.
\]
The uniqueness follows directly from the observation that
\[
	\z(r) = W\Big(\!\diag\big(r+\sqrt{r^2-1},\;1\big)\Big)
\]
for all $r\in[1,\infty)$.
\end{proof}
By means of this representation formula, we can easily show that every objective, isotropic and isochoric function on $\GLpz$ satisfies the \emph{tension-compression symmetry} condition $W(F^{-1})=W(F)$: since
\[
	F^{-1} = \frac{1}{\det F}\, \matr{F_{22}&-F_{12}\\-F_{21}&F_{11}} \quad\text{ for }\; F = \matr{F_{11}&F_{12}\\F_{21}&F_{22}}\,,
\]
we find
\begin{align*}
	\K(F^{-1}) \;=\; \frac12\,\frac{\norm{F^{-1}}^2}{\det (F^{-1})} \;&=\; \frac{\det F}{2}\,\left\|\frac{1}{\det F}\, \matr{F_{22}&-F_{12}\\-F_{21}&F_{11}}\right\|^2\\
	&=\; \frac{1}{2\,\det F}\,\left\|\matr{F_{22}&-F_{12}\\-F_{21}&F_{11}}\right\|^2 \;=\; \frac12\,\frac{\norm{F}^2}{\det F} \;=\; \K(F)
\end{align*}
and thus $W(F)=\z(\K(F))=\z(\K(F^{-1}))=W(F^{-1})$ for all $F\in\GLpz$. Note that this implication is restricted to the two-dimensional case: isochoric energy functions on $\GLp(n)$ are generally not tension-compression symmetric for $n>2$.\\[1em]
\noindent Criteria for the polyconvexity of $W$ can now be established in terms of the function $\z$ corresponding to $W$.
\begin{proposition}
\label{prop:mainResultInTermsOfK}
Let $W:\GLpz\to\R,\;F\mapsto W(F)$ be an objective, isotropic and isochoric function and let $\z:[1,\infty)\to\R$ denote the uniquely determined function with
\[
	W(F) = \z\left(\K(F)\right) = \z\left(\frac12\,\frac{\norm{F}^2}{\det F}\right)
\]
for all $F\in\GLpz$. If $\z\in C^2([0,\infty))$, then the following are equivalent:
\begin{itemize}
\item[i)] $W$ is polyconvex,
\item[ii)] $W$ is rank-one convex,
\item[iii)] $(r^2-1)\,(r+\sqrt{r^2-1})\,\z^{\prime\prime}(r) + z^{\prime}(r)\geq 0$ \quad for all $r\in(1,\infty)$.
\end{itemize}
\end{proposition}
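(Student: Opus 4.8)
The plan is to reduce Proposition \ref{prop:mainResultInTermsOfK} to Corollary \ref{cor:mainResultForDifferentiableFunctions} by the same mechanism used in the proof of Proposition \ref{prop:mainResultInTermsOfLogSquared}: express the function $h$ from Lemma \ref{lemma:fracRepresentation} as a composition involving $\z$, verify the requisite regularity, compute $h''$, and translate the sign condition $h''\geq 0$ on $(1,\infty)$ into the stated inequality in the variable $r=\K(F)$. Since the equivalence of i) and ii) is already known in general (polyconvexity implies rank-one convexity always, and the converse for isochoric energies is Theorem \ref{theorem:mainResult}), the real content is the equivalence of these with iii).

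First I would recall from the proof of Lemma \ref{lemma:Krepresentation} that $W(F)=h(\frac{\lambda_1}{\lambda_2})$ where $h=\z\circ p$ with $p(t)=\frac12(t+\frac1t)$; equivalently, on $[1,\infty)$ we have $h(t)=\z\bigl(\frac12(t+\frac1t)\bigr)$, and since $\z\in C^2$ and $p\in C^\infty(\R^+)$, we get $h\in C^2(\R^+)$. By Corollary \ref{cor:mainResultForDifferentiableFunctions}, $W$ is polyconvex if and only if $h$ is convex on $[1,\infty)$, which — because $h''$ is continuous — is equivalent to $h''(t)\geq 0$ for all $t\in(1,\infty)$. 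Next I would compute, via the chain rule, $h'(t)=\z'(p(t))\cdot p'(t)$ with $p'(t)=\frac12(1-\frac1{t^2})$, and then
\[
	h''(t) = \z''(p(t))\,\bigl(p'(t)\bigr)^2 + \z'(p(t))\,p''(t)\,, \qquad p''(t)=\frac1{t^3}\,.
\]
Since $t>1$ means $p'(t)>0$, the factor $(p'(t))^2>0$ is harmless; the substitution I would use is $r=p(t)=\frac12(t+\frac1t)\in(1,\infty)$, whose inverse on $[1,\infty)$ is $t=q(r)=r+\sqrt{r^2-1}$ (as established in the proof of Lemma \ref{lemma:Krepresentation}). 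The remaining task is the bookkeeping: rewrite $(p'(t))^2$ and $p''(t)$ in terms of $r$, multiply $h''$ by a manifestly positive factor, and collect terms so that the condition $h''\geq 0$ becomes $(r^2-1)(r+\sqrt{r^2-1})\,\z''(r)+\z'(r)\geq 0$.

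The only mildly delicate point — and the step I expect to require the most care — is the algebraic identification of the coefficients after the substitution $t=r+\sqrt{r^2-1}$. Concretely, one needs the identities $t-\frac1t = 2\sqrt{r^2-1}$ (so that $p'(t)=\frac12(1-\frac1{t^2})=\frac{1}{2t}(t-\frac1t)=\frac{\sqrt{r^2-1}}{t}$, giving $(p'(t))^2=\frac{r^2-1}{t^2}$) and $p''(t)=\frac1{t^3}$, whence
\[
	t^3\,h''(t) = t\,(r^2-1)\,\z''(r) + \z'(r) = (r+\sqrt{r^2-1})\,(r^2-1)\,\z''(r) + \z'(r)\,.
\]
Because $t^3>0$, the inequality $h''(t)\geq 0$ for all $t\in(1,\infty)$ is equivalent to this last expression being nonnegative for all $r\in(1,\infty)$, which is exactly criterion iii). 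This completes the chain i) $\Leftrightarrow$ ii) $\Leftrightarrow$ iii). (One should also note in passing, as in Corollary \ref{cor:polyconvexityImpliesMonotonicityITOf}, that $\z'\geq 0$ on $[1,\infty)$ follows, since $h$ is non-decreasing on $[1,\infty)$ by Theorem \ref{theorem:mainResult} and $h'(t)=\z'(p(t))p'(t)$ with $p'(t)>0$ for $t>1$ — but this is not needed for the stated equivalence.)
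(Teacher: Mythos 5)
Your proposal is correct and follows essentially the same route as the paper's proof: both reduce to Corollary \ref{cor:mainResultForDifferentiableFunctions} via $h(t)=\z\bigl(\tfrac{t}{2}+\tfrac{1}{2t}\bigr)$, compute $h''$, and substitute $t=r+\sqrt{r^2-1}$ to obtain criterion iii). The algebraic identities you flag (in particular $t-\tfrac1t=2\sqrt{r^2-1}$, equivalently $(\tfrac{t}{2}+\tfrac{1}{2t})^2-1=\tfrac14(t-\tfrac1t)^2$) are exactly the ones the paper uses.
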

\begin{proof}
As indicated in the proof of Lemma \ref{lemma:Krepresentation}, the unique function $h$ with
$W(F)=h\big(\frac{\lambda_1}{\lambda_2}\big) = h\big(\frac{\lambda_2}{\lambda_1}\big)$ for all $F\in\GLpz$ with singular values $\lambda_1,\lambda_2$ is given by $h(t)=\z\big(\frac{t}{2}+\frac1{2\,t}\big)$ for all $t\geq1$.
By Corollary \ref{cor:mainResultForDifferentiableFunctions}, we only need to show that condition iii) is equivalent to the convexity of $h$ on $[1,\infty)$, i.e.\ to $h^{\prime\prime}(t)\geq0$ for all $t>1$. For $t>1$, we find $h^{\prime}(t) = \frac12\left(1-\frac{1}{t^2}\right)\cdot \z^{\prime}\left(\frac{t}{2}+\frac1{2\,t}\right)$ and
\begin{align*}
	h^{\prime\prime}(t) &= \frac14\,\Big(1-\frac{1}{t^2}\Big)^2\cdot \z^{\prime\prime}\Big(\frac{t}{2}+\frac1{2\,t}\Big) \,+\, \frac{1}{t^3}\cdot \z^{\prime}\Big(\frac{t}{2}+\frac1{2\,t}\Big)\\
	&= \frac{1}{t^3}\,\left[ \frac{t}{4}\,\Big(t-\frac1t\Big)^2\cdot z^{\prime\prime}\Big(\frac{t}{2}+\frac1{2\,t}\Big) \,+\, z^{\prime}\Big(\frac{t}{2}+\frac1{2\,t}\Big)\right]\\
	&= \frac{1}{t^3}\,\left[ t\,\bigg(\!\Big(\frac{t}{2}+\frac1{2\,t}\Big)^2-1\bigg)\cdot z^{\prime\prime}\Big(\frac{t}{2}+\frac1{2\,t}\Big) \,+\, z^{\prime}\Big(\frac{t}{2}+\frac1{2\,t}\Big) \right]\,,
\end{align*}
thus
\[
	h^{\prime\prime}(t)\geq0 \quad \Longleftrightarrow\quad 0 \;\leq\; t\cdot\bigg(\!\Big(\frac{t}{2}+\frac1{2\,t}\Big)^2-1\bigg)\cdot z^{\prime\prime}\Big(\frac{t}{2}+\frac1{2\,t}\Big) \,+\, z^{\prime}\Big(\frac{t}{2}+\frac1{2\,t}\Big)\,.
\]
Recall from the proof of Lemma \ref{lemma:Krepresentation} that the mapping $r\mapsto q(r) = r+\sqrt{r^2-1}$ bijectively maps $(1,\infty)$ onto itself and that $\frac{q(r)}{2}+\frac{1}{2\,q(r)} = r$ for all $r>1$. Therefore, by writing $t=q(r)$, we find that the inequality $h^{\prime\prime}(t)\geq0$ holds for all $t>1$ if and only if
\begin{align*}
	0 &\leq q(r)\cdot\Bigg(\!\bigg(\frac{q(r)}{2}+\frac1{2\,q(r)}\bigg)^2-1\Bigg)\cdot z^{\prime\prime}\bigg(\frac{q(r)}{2}+\frac1{2\,q(r)}\bigg) \,+\, z^{\prime}\bigg(\frac{q(r)}{2}+\frac1{2\,q(r)}\bigg)\\[.5em]
	&= q(r)\cdot(r^2-1)\cdot z^{\prime\prime}(r) \,+\, z^{\prime}(r) \;=\; (r+\sqrt{r^2-1})\cdot(r^2-1)\cdot z^{\prime\prime}(r) \,+\, z^{\prime}(r)
	\qquad\text{ for all }\; r>1\,.\qedhere
\end{align*}
\end{proof}
\noindent An example for the application of Proposition \ref{prop:mainResultInTermsOfK} can be found in Appendix \ref{appendix:distances}.

\section{Applications}
\subsection{The quadratic and the exponentiated isochoric Hencky energy}
Proposition \ref{prop:mainResultInTermsOfLogSquared} can directly be applied to isochoric energy functions given in terms of $\|\dev_2\log U\|^2$.
\begin{corollary}
\label{cor:henckyExamples}
\begin{itemize}\item[]
\item[i)] The isochoric Hencky energy $\|\dev_2\log U\|^2=\frac{1}{2}\log^2 \frac{\lambda_1}{\lambda_2}$ is not polyconvex and not rank-one convex on $\GLpz$.
\item[ii)] The exponentiated isochoric Hencky energy $e^{k\|\dev_2\log U\|^2}=e^{k\,\|\log \frac{U}{\det U^{\ahalf}}\|^2}=e^{\frac{k}{2}\log ^2 \frac{\lambda_1}{\lambda_2}}$ is rank-one convex (and therefore polyconvex) on $\GLpz$ if and only if $k\geq \frac{1}{4}$.
\end{itemize}
\end{corollary}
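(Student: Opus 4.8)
The plan is to apply Proposition~\ref{prop:mainResultInTermsOfLogSquared} directly in both cases, since each energy is presented explicitly as a function $\ftilde$ of $\norm{\dev_2\log U}^2$. For part~i), the isochoric Hencky energy corresponds to $\ftilde(\eta)=\eta$, which is certainly of class $C^2([0,\infty))$; substituting $\ftilde^{\prime}(\eta)=1$ and $\ftilde^{\prime\prime}(\eta)=0$ into criterion~iv) of the proposition leaves the quantity $1-\sqrt{2\,\eta}$, which is strictly negative for every $\eta>\tfrac12$. Hence criterion~iv) fails, and the proposition yields that this energy is neither rank-one convex nor polyconvex on $\GLpz$. (Equivalently, one may use criterion~iii) with $f(\theta)=\tfrac{\theta}{2}$, where the relevant expression $2\,\theta\,f^{\prime\prime}(\theta)+(1-\sqrt{\theta})\,f^{\prime}(\theta)=\tfrac12(1-\sqrt{\theta})$ is negative for $\theta>1$.)

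For part~ii), the exponentiated energy corresponds to $\ftilde(\eta)=e^{k\,\eta}$, again smooth on $[0,\infty)$, with $\ftilde^{\prime}(\eta)=k\,e^{k\eta}$ and $\ftilde^{\prime\prime}(\eta)=k^2\,e^{k\eta}$. Inserting these into criterion~iv) and dividing by the positive factor $k\,e^{k\eta}$ reduces the condition to
\[
	2\,k\,\eta + 1 - \sqrt{2\,\eta} \;\geq\; 0 \qquad\text{for all }\; \eta>0\,.
\]
The natural next step is the substitution $s\colonequals\sqrt{2\,\eta}\in(0,\infty)$, turning this into the quadratic inequality $k\,s^2 - s + 1\geq0$ for all $s>0$. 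Since $k>0$, the left-hand side is a convex parabola in $s$ with vertex at $s=\tfrac{1}{2k}>0$ and minimal value $1-\tfrac{1}{4k}$; thus the inequality holds on all of $(0,\infty)$ if and only if $1-\tfrac{1}{4k}\geq0$, i.e.\ $k\geq\tfrac14$. Applying Proposition~\ref{prop:mainResultInTermsOfLogSquared} then gives the asserted equivalence, with rank-one convexity and polyconvexity being equivalent by that same proposition.

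No genuine obstacle is expected: once Proposition~\ref{prop:mainResultInTermsOfLogSquared} is in hand, both statements are short computations. The only point requiring mild care is the reduction to the quadratic in $s$ and the correct identification of the threshold $k=\tfrac14$ as the discriminant (equivalently, interior-vertex) condition --- the boundary regimes $s\to0$ and $s\to\infty$ satisfy the inequality automatically, so the decisive constraint comes from the interior minimum.
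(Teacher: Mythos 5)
Your proposal is correct and follows essentially the same route as the paper: both parts are handled by inserting $\ftilde(\eta)=\eta$ and $\ftilde(\eta)=e^{k\eta}$ into criterion~iv) of Proposition~\ref{prop:mainResultInTermsOfLogSquared}, and the threshold $k=\tfrac14$ is obtained from the same reduced inequality $2k\eta+1-\sqrt{2\eta}\geq0$ (the paper maximizes $\frac{\sqrt{2\eta}-1}{2\eta}$ where you analyze the quadratic in $s=\sqrt{2\eta}$, which is the same computation). Your substitution $s=\sqrt{2\eta}$ merely makes explicit the step the paper leaves to the reader.
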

\begin{proof}
\noindent i)\; In the case of the isochoric Hencky energy $W(F)=\|\dev_2\log U\|^2=\frac{1}{2}\log^2 \frac{\lambda_1}{\lambda_2}$, the  function $\ftilde$ is defined by $\ftilde(\eta)=\eta$. This function %
does not satisfy condition iv) in Proposition \ref{prop:mainResultInTermsOfLogSquared}: since
    \begin{align}
    	2\,\eta\,\ftilde^{\prime\prime}(\eta)+ (1-\sqrt{2\,\eta})\,\ftilde^{\prime}(\eta) = 1-\sqrt{2\,\eta}\,,
    \end{align}
    the inequality is not satisfied for $\eta>\frac12$.

\noindent ii)\; For the exponentiated isochoric Hencky energy $W(F)=e^{k\|\dev_2\log U\|^2}=e^{k\,\|\log \frac{U}{\det U^{\ahalf}}\|^2}=e^{\frac{k}{2}\log ^2 \frac{\lambda_1}{\lambda_2}}$, the functions $f,\ftilde:[0,\infty)\rightarrow\mathbb{R}$ are defined by $f(\theta)=e^{\frac{k}{2}\theta}$ and $\ftilde(\eta)=e^{k\eta}$. We find
\[
	2\,\eta\,\ftilde^{\prime\prime}(\eta)+(1-\sqrt{2\,\eta})\,\ftilde^{\prime}(\eta) = 2\,\eta\,k^2\,e^{k\eta} + (1-\sqrt{2\,\eta})\,\,k\,e^{k\eta}\,,
\]
thus condition iv) in Proposition \ref{prop:mainResultInTermsOfLogSquared} is equivalent to
\[
	\quad k \geq \frac{\sqrt{2\,\eta}-1}{2\,\eta } \quad\text{ for all }\;\eta>0\,.
\]
This inequality is satisfied if and only if $k\geq \frac{1}{4}$. Therefore the requirement $k\geq \frac{1}{4}$ is necessary and sufficient for the rank-one convexity as well as for the polyconvexity of the isochoric exponentiated Hencky energy $e^{k\|\dev_2\log U\|^2}$.
\end{proof}
\noindent Our results can also be applied to non-isochoric energy functions possessing an additive \emph{isochoric-volumetric split}\footnote{In nonlinear elasticity theory, the assumption that an elastic energy function takes on this specific form is due to the physically plausible requirement that the mean pressure should depend only on the determinant of the deformation gradient $F$, i.e. that there exists a function $\mathcal{F}:\R^+\to\R$ such that $\frac1n\tr\sigma = \mathcal{F}(\det F)$, where $\sigma$ denotes the Cauchy stress tensor; cf.\ \cite{richter1948isotrope,sansour2008physical,charrier1988existence}.}, i.e.\ energy functions $W$ of the form
\label{sectionContains:applicationToIsoVolSplit}
\[
	W:\GLpz\to\R\,,\quad W(F) = W^{\mathrm{iso}}(F) + W^{\mathrm{vol}}(\det F) \;=\; W^{\mathrm{iso}}\left(\frac{F}{(\det F)^\ahalf}\right) + W^{\mathrm{vol}}(\det F)
\]
with an isochoric function $W^{\mathrm{iso}}\colon\GLpz\to\R$ and a function $W^{\mathrm{vol}}:\R^+\to\R$.
In this case, Theorem \ref{theorem:mainResult} and Propositions \ref{prop:mainResultInTermsOfLogSquared} and \ref{prop:mainResultInTermsOfK} provide sufficient criteria for the polyconvexity of $W$: if $W^{\mathrm{vol}}$ is convex on $\R^+$, then the polyconvexity of $W^{\mathrm{iso}}$ is sufficient for $W$ to be polyconvex as well. For example, since the mapping $t\mapsto \frac{\kappa}{2\,\widehat{k}}\,e^{\widehat{k}\,[(\log t)]^2}$ is convex on $\R^+$ for $\widehat{k}\geq\frac18$, it follows from Corollary \ref{cor:henckyExamples} that the exponentiated Hencky energy $W_{_{\rm eH}}:\GLpz\to\R$ with
\[
	W_{_{\rm eH}}(F)=W_{_{\rm eH}}^{\mathrm{iso}}\left(F\right)+W_{_{\rm eH}}^{\mathrm{vol}}(\det F) = \dd\frac{\mu}{k}\,e^{k\,\|\dev_2\log U\|^2}+\frac{\kappa}{2\,\widehat{k}}\,e^{\widehat{k}\,[(\log \det U)]^2}
\]
is polyconvex for $k\geq\frac14$ and $\widehat{k}\geq\frac18$, as indicated in Section \ref{section:introduction}.

\subsection{Growth conditions for polyconvex isochoric energies}\setcounter{equation}{0}
By integrating the polyconvexity criteria given in Proposition \ref{prop:mainResultInTermsOfLogSquared}, we obtain an exponential growth condition for the function $f$ which is necessarily satisfied if $W$ is rank-one convex (i.e.\ polyconvex).
 \begin{corollary}
 Let $W:\GLpz\to\R$ with $W(F)=\ftilde(\|\dev_2\log U\|^2)=f\left(\log^2 \frac{\lambda_1}{\lambda_2}\right)$ be a polyconvex energy function with $f\in C^2([0,\infty))$. If $f^\prime(\theta) \neq 0$ for all $\theta>0$, then the function $f$ satisfies the inequality
 \begin{align}
f(\theta)\geq (e^{\sqrt{\theta}}-1)\,\frac{\sqrt{\varepsilon}}{e^{\sqrt{\varepsilon}}}\,f^{\prime}(\varepsilon)\,
+f(0) \qquad \text{for all }\;\theta,\varepsilon>0\,.
\end{align}
 \end{corollary}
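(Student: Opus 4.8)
The plan is to reduce the statement to criterion iii) of Proposition~\ref{prop:mainResultInTermsOfLogSquared}, namely the differential inequality $2\theta\,f''(\theta)+(1-\sqrt\theta)\,f'(\theta)\ge0$ for all $\theta>0$, together with the monotonicity $f'\ge0$ provided by Corollary~\ref{cor:polyconvexityImpliesMonotonicityITOf}, and then to integrate. The decisive observation is that the left-hand side of criterion iii) is, up to a strictly positive factor, the derivative of the auxiliary function
\[
	g(\theta)\colonequals e^{-\sqrt\theta}\,\sqrt\theta\,f'(\theta)\,,
\]
since a direct computation gives $g'(\theta)=\dfrac{e^{-\sqrt\theta}}{2\sqrt\theta}\bigl(2\theta\,f''(\theta)+(1-\sqrt\theta)\,f'(\theta)\bigr)$. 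Hence polyconvexity of $W$ forces $g$ to be non-decreasing on $(0,\infty)$; equivalently, one may note that $g(\theta)=\tfrac12\,h'\!\bigl(e^{\sqrt\theta}\bigr)$ for the convex function $h(t)=f(\log^2 t)$ of Corollary~\ref{cor:mainResultForDifferentiableFunctions}, so the monotonicity of $g$ is nothing but the monotonicity of $h'$ on $[1,\infty)$.

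With this in hand I would fix $\varepsilon>0$ and rewrite $f'(s)=\dfrac{e^{\sqrt s}}{\sqrt s}\,g(s)$. For $s\ge\varepsilon$ the monotonicity of $g$ gives $f'(s)\ge\dfrac{e^{\sqrt s}}{\sqrt s}\,g(\varepsilon)$, and since $\dfrac{\mathrm{d}}{\mathrm{d}s}\bigl(2e^{\sqrt s}\bigr)=\dfrac{e^{\sqrt s}}{\sqrt s}$, integration over $[\varepsilon,\theta]$ yields, for every $\theta\ge\varepsilon$,
\[
	f(\theta)-f(\varepsilon)=\int_\varepsilon^\theta f'(s)\,\mathrm{d}s\;\ge\;g(\varepsilon)\int_\varepsilon^\theta\frac{e^{\sqrt s}}{\sqrt s}\,\mathrm{d}s\;=\;2\,g(\varepsilon)\,\bigl(e^{\sqrt\theta}-e^{\sqrt\varepsilon}\bigr)\,.
\]
This is exactly the tangent-line inequality $h(t)\ge h\bigl(e^{\sqrt\varepsilon}\bigr)+h'\bigl(e^{\sqrt\varepsilon}\bigr)\bigl(t-e^{\sqrt\varepsilon}\bigr)$ for the convex $h$ evaluated at $t=e^{\sqrt\theta}$, and as such it in fact holds for every $\theta>0$. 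Recalling that $g(\varepsilon)=\dfrac{\sqrt\varepsilon}{e^{\sqrt\varepsilon}}\,f'(\varepsilon)$ and using $f(\varepsilon)\ge f(0)$ (again Corollary~\ref{cor:polyconvexityImpliesMonotonicityITOf}), one obtains a growth estimate of the announced shape,
\[
	f(\theta)\;\ge\;2\,\frac{\sqrt\varepsilon}{e^{\sqrt\varepsilon}}\,f'(\varepsilon)\,\bigl(e^{\sqrt\theta}-e^{\sqrt\varepsilon}\bigr)+f(0)\,,
\]
and in particular $f(\theta)$ grows at least like $e^{\sqrt\theta}$ as $\theta\to\infty$.

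The hard part will be the passage from the last display to the precise inequality stated in the corollary, in which $e^{\sqrt\theta}-e^{\sqrt\varepsilon}$ is replaced by the smaller quantity $e^{\sqrt\theta}-1$ and the prefactor $2$ is dropped. Since $e^{\sqrt\varepsilon}\ge1$ one has $2\bigl(e^{\sqrt\theta}-e^{\sqrt\varepsilon}\bigr)\ge e^{\sqrt\theta}-1$ precisely when $e^{\sqrt\theta}\ge2e^{\sqrt\varepsilon}-1$, so the stated bound follows immediately whenever $\theta$ is sufficiently large compared with $\varepsilon$; for $\theta$ close to $0$, however, the factor-$2$ surplus is genuinely needed, and the assumption $f\in C^2([0,\infty))$ forces $g(0^+)=0$ (because $f'(0)$ is finite), which makes the naive bound $\int_0^\varepsilon f'(s)\,\mathrm{d}s\ge0$ insufficient in that regime. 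I would therefore expect that either an additional restriction on the range of $\theta$ (e.g.\ $\varepsilon\le\theta$ with $e^{\sqrt\theta}\ge2e^{\sqrt\varepsilon}-1$) is intended, or that a sharper lower bound on $\int_0^\varepsilon f'(s)\,\mathrm{d}s$ in terms of $g(\varepsilon)$ must be extracted from the monotonicity of $g$ near the origin; clarifying this point and sharpening the estimate accordingly is the step on which I would concentrate.
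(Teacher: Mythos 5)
Your derivation is correct as far as it goes, and it follows essentially the same route as the paper: the paper likewise starts from criterion iii) of Proposition \ref{prop:mainResultInTermsOfLogSquared} together with $f'\ge 0$, divides by $f'$, integrates $f''/f'\ge(\sqrt\theta-1)/(2\theta)$ from $\varepsilon$ to $\theta$ to get $f'(\theta)\ge f'(\varepsilon)\,\frac{\sqrt\varepsilon}{e^{\sqrt\varepsilon}}\,\frac{e^{\sqrt\theta}}{\sqrt\theta}$, and then integrates a second time. Your reformulation through the monotone function $g(\theta)=e^{-\sqrt\theta}\sqrt\theta\,f'(\theta)=\tfrac12\,h'\bigl(e^{\sqrt\theta}\bigr)$ is a genuine improvement (it does not even need the hypothesis $f'\neq0$), and your intermediate bound $f(\theta)\ge f(0)+2\,\frac{\sqrt\varepsilon}{e^{\sqrt\varepsilon}}\,f'(\varepsilon)\,\bigl(e^{\sqrt\theta}-e^{\sqrt\varepsilon}\bigr)$ is valid for all $\theta,\varepsilon>0$.

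The obstacle you flag at the end is, however, not a gap in your argument but an error in the printed statement and in the paper's own proof. The paper's first integrated inequality is exactly $g(\theta)\ge g(\varepsilon)$ and therefore holds only for $\theta\ge\varepsilon$; the paper nevertheless integrates the pointwise bound $f'(s)\ge g(\varepsilon)\,e^{\sqrt s}/\sqrt s$ over all of $[\delta,\theta]$ and lets $\delta\to0$, thereby overestimating $\int_0^\varepsilon f'(s)\,\mathrm{d}s$, where the bound fails. (It also uses $e^{\sqrt s}$ instead of the correct antiderivative $2e^{\sqrt s}$, which by itself would only strengthen the conclusion.) The stated inequality is in fact false: take $f'(\theta)=\theta^{3/2}e^{\sqrt\theta}$ with $f(0)=0$. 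Then $f\in C^2([0,\infty))$, $f'>0$ on $(0,\infty)$, and $2\theta f''(\theta)+(1-\sqrt\theta)f'(\theta)=4\,\theta^{3/2}e^{\sqrt\theta}\ge0$, so the corresponding $W$ is polyconvex; yet at $\theta=\varepsilon=1$ the right-hand side equals $(e-1)\cdot\tfrac1e\cdot e+0=e-1\approx1.718$, while $f(1)=\int_0^1 s^{3/2}e^{\sqrt s}\,\mathrm{d}s=2\int_0^1u^4e^u\,\mathrm{d}u=2\,(9e-24)\approx0.93$. So no restriction of the range or sharper estimate near the origin will recover the printed constant; the correct conclusion is precisely the inequality you derived, which still yields the intended qualitative growth $f(\theta)\ge C_1\,e^{\sqrt\theta}+C_2$ with $C_1=2\,\frac{\sqrt\varepsilon}{e^{\sqrt\varepsilon}}\,f'(\varepsilon)>0$ for any fixed $\varepsilon>0$.
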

 \begin{proof}
 According to Proposition \ref{prop:mainResultInTermsOfLogSquared} and Corollary \ref{cor:polyconvexityImpliesMonotonicityITOf}, if the energy $W$ is polyconvex, then
\begin{align}
2\,\theta\,  f^{\prime\prime}(\theta) + (1-\sqrt{\theta})\,f^{\prime}(\theta)\geq 0 \quad\text{ and }\quad f^{\prime}(\theta)\geq0 \quad\text{ for all }\;\theta> 0\,.
\end{align}
Under our assumption $f^{\prime}(\theta)\neq0$, we therefore find $f^{\prime}(\theta)>0$ for all $\theta>0$ and deduce
\begin{align}
\frac{  f^{\prime\prime}(\theta)}{ f^{\prime}(\theta)}\geq \frac{\sqrt{\theta}-1}{2\,\theta}  \quad\text{ for all }\;\theta>0\,.
\end{align}
By integration from $\varepsilon>0$ to $\theta$, it follows that
\begin{align}
\log  f^{\prime}(\theta)\geq \log  f^{\prime}(\varepsilon) +\frac{1}{2}\left(2\sqrt{\theta}-\log\theta\right)-\frac{1}{2}\left(2\sqrt{\varepsilon}-\log\varepsilon\right) \quad\text{ for all }\;\theta,\varepsilon>0\,,
\end{align}
thus we obtain
\begin{align}
 f^{\prime}(\theta)\geq e^{\log  f^{\prime}(\varepsilon) +\frac{1}{2}\left(2\sqrt{\theta}-\log\theta\right)-\frac{1}{2}\left(2\sqrt{\varepsilon}-\log\varepsilon\right)}=
 f^{\prime}(\varepsilon)\,e^{-\sqrt{\varepsilon}+\frac{1}{2}\log\varepsilon}\,e^{\sqrt{\theta}-\frac{1}{2}\log\theta}
 =
 f^{\prime}(\varepsilon)\,\frac{\sqrt{\varepsilon}}{e^{\sqrt{\varepsilon}}}\,\frac{e^{\sqrt{\theta}}}{\sqrt{\theta}}
\end{align}
for all $\theta,\varepsilon>0$. By another integration on the interval $[\delta,\theta]$, $\delta>0$, we obtain
\begin{align}
f(\theta)\geq f^{\prime}(\varepsilon)\,\frac{\sqrt{\varepsilon}}{e^{\sqrt{\varepsilon}}}\,e^{\sqrt{\theta}}
+f(\delta)-f^{\prime}(\varepsilon)\,\frac{\sqrt{\varepsilon}}{e^{\sqrt{\varepsilon}}}\,e^{\sqrt{\delta}}
\end{align}
for all $\theta,\varepsilon,\delta>0$. Taking the limit case $\delta\rightarrow 0$ and using the continuity of the function $f$, we finally obtain
\begin{align}
f(\theta)\geq f^{\prime}(\varepsilon)\,\frac{\sqrt{\varepsilon}}{e^{\sqrt{\varepsilon}}}\,e^{\sqrt{\theta}}
+f(0)-f^{\prime}(\varepsilon)\,\frac{\sqrt{\varepsilon}}{e^{\sqrt{\varepsilon}}} \quad\text{ for all }\;\theta,\varepsilon>0,
\end{align}
and the proof is complete.
\end{proof}
\begin{remark} Since $f^\prime(\theta)\geq 0$ for all $\theta\geq 0$, a necessary condition is that
\begin{equation}
\label{eq:growthConditionITOf}
	f(\theta) \;\geq\; C_1 \,e^{\sqrt{\theta}}+C_2 \quad\text{ for all }\;\theta>0\,,
\end{equation}
for $C_1=\frac{1}{e}\,f^{\prime}(1)>0$ and $C_2=f(0)-\frac{1}{e}\,f^{\prime}(1)\in\R$. In terms of the function $h$ with $W(F)=h\big(\frac{\lambda_1}{\lambda_2}\big)$, inequality \eqref{eq:growthConditionITOf} also implies
\[
	h(t) \;\geq\; C_1\,t + C_2 \quad\text{ for all }\; t>1\,,
\]
since $h(t) = f(\log^2 t)$.
\end{remark}
Sendova and Walton \cite{Walton05} derive similar necessary growth conditions for the three-dimensional case. Growth conditions for polyconvex functions have also been considered by Yan \cite{yan1997rank}, who showed that non-constant polyconvex conformal energy functions defined on all of $\R^{n\times n}$ must grow at least with power $n$.

\newpage

\bibliographystyle{plain} %
\addcontentsline{toc}{section}{References}

\begin{footnotesize}

\begin{thebibliography}{10}

\bibitem{alibert1992example}
J.-J. Alibert and B.~Dacorogna.
\newblock An example of a quasiconvex function that is not polyconvex in two
  dimensions.
\newblock {\em Archive for Rational Mechanics and Analysis}, 117(2):155--166,
  1992.

\bibitem{eremeyev2007constitutive}
H.~Altenbach, V.~Eremeyev, L.~Lebedev, and L.A. Rend{\'o}n.
\newblock Acceleration waves and ellipticity in thermoelastic micropolar media.
\newblock {\em Arch. Appl. Mech.}, 80(3):217--227, 2010.

\bibitem{astala2012quasiconformal}
K.~Astala, T.~Iwaniec, I.~Prause, and E.~Saksman.
\newblock Burkholder integrals, {M}orrey's problem and quasiconformal mappings.
\newblock {\em Journal of the American Mathematical Society}, 25(2):507--531,
  2012.

\bibitem{aubert1987counterexample}
G~Aubert.
\newblock On a counterexample of a rank 1 convex function which is not
  polyconvex in the case {$N=2$}.
\newblock {\em Proceedings of the Royal Society of Edinburgh: Section A
  Mathematics}, 106(3-4):237--240, 1987.

\bibitem{aubert1995necessary}
G.~Aubert.
\newblock Necessary and sufficient conditions for isotropic rank-one convex
  functions in dimension 2.
\newblock {\em Journal of Elasticity}, 39(1):31--46, 1995.

\bibitem{Ball78}
J.M. Ball.
\newblock Constitutive inequalities and existence theorems in nonlinear
  elastostatics.
\newblock In R.J. Knops, editor, {\em Herriot {W}att {S}ymposion: {N}onlinear
  {A}nalysis and {M}echanics.}, volume~1, pages 187--238. Pitman, London, 1977.

\bibitem{Ball77}
J.M. Ball.
\newblock Convexity conditions and existence theorems in nonlinear elasticity.
\newblock {\em Archive for Rational Mechanics and Analysis}, 63:337--403, 1977.

\bibitem{Balldoes}
J.M. Ball.
\newblock Does rank one convexity imply quasiconvexity?
\newblock {\em Preprint 262, Inst. for Math. and its Appl., Univ. of Minnesota
  at Minneapolis}, 1983.

\bibitem{ball1990sets}
J.M. Ball.
\newblock Sets of gradients with no rank-one connections.
\newblock {\em Journal de Math{\'e}matiques Pures et Appliqu{\'e}es},
  69(3):241--259, 1990.

\bibitem{Ball02}
J.M. Ball.
\newblock Some open problems in elasticity.
\newblock In P.~Newton et~al., editor, {\em Geometry, mechanics, and
  dynamics.}, pages 3--59. Springer, New-York, 2002.

\bibitem{ball2015incompatible}
J.M. Ball and R.D. James.
\newblock Incompatible sets of gradients and metastability.
\newblock {\em Archive for Rational Mechanics and Analysis}, pages 1--54, 2015.

\bibitem{Ball84b}
J.M. Ball and F.~Murat.
\newblock {$W^{1,p}$}-quasiconvexity and variational problems for multiple
  integrals.
\newblock {\em J. Funct. Anal.}, 58:225--253, 1984.

\bibitem{bertram2007rank}
A.~Bertram, T.~B{\"o}hlke, and M.~{\v{S}}ilhav{\`y}.
\newblock On the rank 1 convexity of stored energy functions of physically
  linear stress-strain relations.
\newblock {\em Journal of Elasticity}, 86(3):235--243, 2007.

\bibitem{Brighi97}
M.~Bousselsal and B.~Brighi.
\newblock Rank-one-convex and quasiconvex envelopes for functions depending on
  quadratic forms.
\newblock {\em J. Convex Anal.}, 4:305--319, 1997.

\bibitem{Bruhns01}
O.T. Bruhns, H.~Xiao, and A.~Mayers.
\newblock Constitutive inequalities for an isotropic elastic strain energy
  function based on {H}encky's logarithmic strain tensor.
\newblock {\em Proc. Roy. Soc. London A}, 457:2207--2226, 2001.

\bibitem{charrier1988existence}
P.~Charrier, B.~Dacorogna, B.~Hanouzet, and P.~Laborde.
\newblock An existence theorem for slightly compressible materials in nonlinear
  elasticity.
\newblock {\em SIAM Journal on Mathematical Analysis}, 19(1):70--85, 1988.

\bibitem{chaudhuri2003rank}
N.~Chaudhuri and S.~M{\"u}ller.
\newblock Rank-one convexity implies quasi-convexity on certain hypersurfaces.
\newblock {\em Proceedings of the Royal Society of Edinburgh: Section A
  Mathematics}, 133(06):1263--1272, 2003.

\bibitem{chelminski2006new}
K.~Che{\l}mi{\'n}ski and A.~Ka{\l}amajska.
\newblock New convexity conditions in the calculus of variations and
  compensated compactness theory.
\newblock {\em ESAIM: Control, Optimisation and Calculus of Variations},
  12(01):64--92, 2006.

\bibitem{chirictua1999time}
S.~Chiri{\c{t}}{\u{a}} and M.~Ciarletta.
\newblock Time-weighted surface power function method for the study of spatial
  behaviour in dynamics of continua.
\newblock {\em Eur. J. Mech.-A/Solids}, 18(5):915--933, 1999.

\bibitem{chirictua2007strong}
S.~Chiri{\c{t}}{\u{a}}, A.~Danescu, and M.~Ciarletta.
\newblock On the strong ellipticity of the anisotropic linearly elastic
  materials.
\newblock {\em Journal of Elasticity}, 87(1):1--27, 2007.

\bibitem{ChiritaGhiba1}
S.~Chiri{\c t}\u{a} and I.D. Ghiba.
\newblock Strong ellipticity and progressive waves in elastic materials with
  voids.
\newblock {\em Proc. R. Soc. A}, 466:439--458, 2010.

\bibitem{conti2008quasiconvex}
S.~Conti.
\newblock Quasiconvex functions incorporating volumetric constraints are
  rank-one convex.
\newblock {\em Journal de Math{\'e}matiques Pures et Appliqu{\'e}es},
  90(1):15--30, 2008.

\bibitem{conti2005rank}
S.~Conti, D.~Faraco, F.~Maggi, and S.~M{\"u}ller.
\newblock Rank-one convex functions on 2$\times$2 symmetric matrices and
  laminates on rank-three lines.
\newblock {\em Calc. Var. Partial Differential Equations}, 24(4):479--493,
  2005.

\bibitem{conti2003polyconvexity}
S.~Conti, C.~De Lellis, S.~M{\"u}ller, and M.~Romeo.
\newblock Polyconvexity equals rank-one convexity for connected isotropic sets
  in {$\mathbb{M}^{2\times2}$}.
\newblock {\em Compt. Rend. Math.}, 337(4):233--238, 2003.

\bibitem{criscione2000invariant}
J.C. Criscione, J.D. Humphrey, A.S. Douglas, and W.C. Hunter.
\newblock An invariant basis for natural strain which yields orthogonal stress
  response terms in isotropic hyperelasticity.
\newblock {\em Journal of the Mechanics and Physics of Solids},
  48(12):2445--2465, 2000.

\bibitem{Dacorogna01}
B.~Dacorogna.
\newblock Necessary and sufficient conditions for strong ellipticity of
  isotropic functions in any dimension.
\newblock {\em Discrete Cont. Dyn. Syst.}, 1(2):257--263, 2001.

\bibitem{Dacorogna08}
B.~Dacorogna.
\newblock {\em Direct {M}ethods in the {C}alculus of {V}ariations.}, volume~78
  of {\em Applied {M}athematical {S}ciences}.
\newblock Springer, Berlin, 2. edition, 2008.

\bibitem{davies1991simple}
P.J. Davies.
\newblock A simple derivation of necessary and sufficient conditions for the
  strong ellipticity of isotropic hyperelastic materials in plane strain.
\newblock {\em Journal of Elasticity}, 26(3):291--296, 1991.

\bibitem{de2012note}
D.~De~Tommasi, G.~Puglisi, and G.~Zurlo.
\newblock A note on strong ellipticity in two-dimensional isotropic elasticity.
\newblock {\em Journal of Elasticity}, 109(1):67--74, 2012.

\bibitem{dolzmann2012}
G.~Dolzmann.
\newblock Regularity of minimizers in nonlinear elasticity -- the case of a
  one-well problem in nonlinear elasticity.
\newblock {\em Technische Mechanik}, 32:189--194, 2012.

\bibitem{dolzmann2013}
G.~Dolzmann, J.~Kristensen, and K.~Zhang.
\newblock {BMO} and uniform estimates for multi-well problems.
\newblock {\em Manuscripta Mathematica}, 140(1-2):83--114, 2013.

\bibitem{edelstein1968note}
W.~Edelstein and R.~Fosdick.
\newblock A note on non-uniqueness in linear elasticity theory.
\newblock {\em Z. Angew. Math. Phys.}, 19(6):906--912, 1968.

\bibitem{ernst1998ellipticity}
E.~Ernst.
\newblock Ellipticity loss in isotropic elasticity.
\newblock {\em Journal of Elasticity}, 51(3):203--211, 1998.

\bibitem{fosdick2007note}
R.~Fosdick, M.D. Piccioni, and G.~Puglisi.
\newblock A note on uniqueness in linear elastostatics.
\newblock {\em Journal of Elasticity}, 88(1):79--86, 2007.

\bibitem{GhibaAn}
I.D. Ghiba.
\newblock On the spatial behaviour of harmonic vibrations in an elastic
  cylinder.
\newblock {\em An. St. Univ. Iasi, Sect. Matematica}, 52(f.1):75--86, 2006.

\bibitem{ghiba2015spatial}
I.D. Ghiba and E.~Bulgariu.
\newblock On spatial evolution of the solution of a non-standard problem in the
  bending theory of elastic plates.
\newblock {\em IMA J. Appl. Math.}, 80(2):452--473, 2015.

\bibitem{ghiba2015exponentiated}
I.D. Ghiba, P.~Neff, and M.~{\v{S}}ilhav{\'y}.
\newblock The exponentiated {Hencky}-logarithmic strain energy. {Improvement}
  of planar polyconvexity.
\newblock {\em Int. J. Non-Linear Mech.}, 71:48--51, 2015.

\bibitem{heinz2015quasiconvexity}
S.~Heinz.
\newblock Quasiconvexity equals lamination convexity for isotropic sets of
  {$2\times2$} matrices.
\newblock {\em Advances in Calculus of Variations}, 8(1):43--53, 2015.

\bibitem{Hutchinson82}
J.W. Hutchinson and K.W. Neale.
\newblock Finite strain {${J}_2$}-deformation theory.
\newblock In D.E. Carlson and R.T. Shield, editors, {\em Proceedings of the
  IUTAM Symposium on Finite Elasticity}, pages 237--247. Martinus Nijhoff,
  1982,
  \texttt{https://www.uni-due.de/imperia/md/content/mathematik/ag\_neff/hutchinson\_ellipticity80.pdf}.

\bibitem{iwaniec2009}
T.~Iwaniec and J.~Onninen.
\newblock Hyperelastic deformations of smallest total energy.
\newblock {\em Archive for Rational Mechanics and Analysis}, 194(3):927--986,
  2009.

\bibitem{iwaniec2011invitation}
T.~Iwaniec and J.~Onninen.
\newblock An invitation to n-harmonic hyperelasticity.
\newblock {\em Pure and Applied Mathematics Quarterly}, 7(2), 2011.

\bibitem{kalamajska2003new}
A.~Ka{\l}amajska.
\newblock On new geometric conditions for some weakly lower semicontinuous
  functionals with applications to the rank-one conjecture of morrey.
\newblock {\em Proceedings of the Royal Society of Edinburgh: Section A
  Mathematics}, 133(06):1361--1377, 2003.

\bibitem{kalamajska2015}
A.~Ka{\l}amajska and P.~Kozarzewski.
\newblock On the condition of tetrahedral polyconvexity arising from calculus
  of variations.
\newblock 2015.
\newblock Preprint available at
  \url{http://www.mimuw.edu.pl/badania/preprinty/preprinty-imat/papers/77/pr-imat-77.pdf}.

\bibitem{knowles1975ellipticity}
J.K. Knowles and E.~Sternberg.
\newblock On the ellipticity of the equations of nonlinear elastostatics for a
  special material.
\newblock {\em Journal of Elasticity}, 5(3-4):341--361, 1975.

\bibitem{knowles1976failure}
J.K. Knowles and E.~Sternberg.
\newblock On the failure of ellipticity of the equations for finite
  elastostatic plane strain.
\newblock {\em Archive for Rational Mechanics and Analysis}, 63(4):321--336,
  1976.

\bibitem{kreiner2006topology}
C.-F. Kreiner and J.~Zimmer.
\newblock Topology and geometry of nontrivial rank-one convex hulls for
  two-by-two matrices.
\newblock {\em ESAIM: Control, Optimisation and Calculus of Variations},
  12(2):253--270, 2006.

\bibitem{marcellini1984quasiconvex}
P.~Marcellini.
\newblock Quasiconvex quadratic forms in two dimensions.
\newblock {\em Applied Mathematics and Optimization}, 11(1):183--189, 1984.

\bibitem{morrey1952quasi}
C.B. Morrey.
\newblock Quasi-convexity and the lower semicontinuity of multiple integrals.
\newblock {\em Pacific J. Math}, 2(1):25--53, 1952.

\bibitem{muller1999rank}
S.~M\"uller.
\newblock Rank-one convexity implies quasiconvexity on diagonal matrices.
\newblock {\em Int. Math. Res. Not.}, 1999(20):1087--1095, 1999.

\bibitem{ndanou2014hyperbolicity}
S.~Ndanou, N.~Favrie, and S.~Gavrilyuk.
\newblock Criterion of {H}yperbolicity in {H}yperelasticity in the {C}ase of
  the {S}tored {E}nergy in {S}eparable {F}orm.
\newblock {\em Journal of Elasticity}, 115(1):1--25, 2014.

\bibitem{Neff_Diss00}
P.~Neff.
\newblock {\em Mathematische {A}nalyse multiplikativer {V}iskoplastizit\"at.
  {P}h.{D}. {t}hesis, {Technische Universit\"at Darmstadt}.}
\newblock Shaker Verlag, ISBN:3-8265-7560-1,
  \texttt{https://www.uni-due.de/$\sim$hm0014/Download\_files/cism\_convexity08.pdf},
  Aachen, 2000.

\bibitem{Neff_critique05}
P.~Neff.
\newblock Critique of ``{T}wo-dimensional examples of rank-one convex functions
  that are not quasiconvex" by {M.K. B}enaouda and {J.J. T}elega.
\newblock {\em Annales Polonici Mathematici}, 86.2:193--195, 2005.

\bibitem{Neff_Osterbrink_Martin_hencky13}
P.~Neff, B.~Eidel, and R.J. Martin.
\newblock Geometry of logarithmic strain measures in solid mechanics.
\newblock {\em Preprint arXiv:1505.02203}, 2015.

\bibitem{neff2013hencky}
P.~Neff, B.~Eidel, F.~Osterbrink, and R.~Martin.
\newblock The {H}encky strain energy $\|\log {U}\|^2$ measures the geodesic
  distance of the deformation gradient to $\rm{SO(3)}$ in the canonical
  left-invariant {R}iemannian metric on $\rm{GL(3)}$.
\newblock {\em PAMM}, 13(1):369--370, 2013.

\bibitem{NeffEidelOsterbrinkMartin_Riemannianapproach}
P.~{Neff}, B.~{Eidel}, F.~{Osterbrink}, and R.~{Martin}.
\newblock {A Riemannian approach to strain measures in nonlinear elasticity}.
\newblock {\em C. R. Acad. Sci.}, 342:254--257, 2014.

\bibitem{NeffGhibaPlasticity}
P.~Neff and I.D. Ghiba.
\newblock The exponentiated {Hencky}-logarithmic strain energy.{ Part III:
  Coupling} with idealized isotropic finite strain plasticity.
\newblock {\em to appear in Cont. Mech. Thermod., arXiv:1409.7555}, the special
  issue in honour of D.J. Steigmann, 2015.

\bibitem{NeffGhibaLankeit}
P.~Neff, I.D. Ghiba, and J.~Lankeit.
\newblock The exponentiated {H}encky-logarithmic strain energy. {P}art {I}:
  {C}onstitutive issues and rank--one convexity.
\newblock {\em to appear in Journal of Elasticity, doi:
  10.1007/s10659-015-9524-7}, arXiv:1403.3843, 2015.

\bibitem{NeffGhibaPoly}
P.~Neff, I.D. Ghiba, J.~Lankeit, R.~Martin, and D.J. Steigmann.
\newblock The exponentiated {H}encky-logarithmic strain energy. {P}art {II}:
  {C}oercivity, planar polyconvexity and existence of minimizers.
\newblock {\em to appear in Z. Angew. Math. Phys., doi:
  10.1007/s00033-015-0495-0, arXiv:1408.4430}, 2015.

\bibitem{Ogden83}
R.W. Ogden.
\newblock {\em Non-{L}inear {E}lastic {D}eformations.}
\newblock Mathematics and its Applications. Ellis Horwood, Chichester, 1.
  edition, 1983.

\bibitem{parry1995planar}
G.P. Parry.
\newblock On the planar rank-one convexity condition.
\newblock {\em Proceedings of the Royal Society of Edinburgh: Section A
  Mathematics}, 125(02):247--264, 1995.

\bibitem{parry2000rank}
G.P. Parry and M.~{\v{S}}ilhav{\`y}.
\newblock On rank one connectedness, for planar objective functions.
\newblock {\em Journal of Elasticity}, 58(2):177--189, 2000.

\bibitem{pedregal1996some}
P.~Pedregal.
\newblock Some remarks on quasiconvexity and rank-one convexity.
\newblock {\em Proceedings of the Royal Society of Edinburgh: Section A
  Mathematics}, 126(05):1055--1065, 1996.

\bibitem{pedregal2014some}
P.~Pedregal.
\newblock Some evidence in favor of {Morrey's} conjecture.
\newblock {\em Preprint arXiv:1406.7199}, 2014.

\bibitem{Sverak98}
P.~Pedregal and V.~{\v{S}}ver{\'a}k.
\newblock A note on quasiconvexity and rank-one convexity for $2\times2$
  matrices.
\newblock {\em J. Convex. Anal.}, 5:107--117, 1998.

\bibitem{Raoult86}
A.~Raoult.
\newblock Non-polyconvexity of the stored energy function of a
  {S}t.{V}enant-{K}irchhoff material.
\newblock {\em Aplikace Matematiky}, 6:417--419, 1986.

\bibitem{richter1948isotrope}
H.~Richter.
\newblock {Das isotrope Elastizit{\"a}tsgesetz}.
\newblock {\em Z. Angew. Math. Mech.}, 28(7-8):205--209, 1948,\\
  \texttt{https://www.uni-due.de/imperia/md/content/mathematik/ag\_neff/richter\_isotrop\_log.pdf}.

\bibitem{sansour2008physical}
C.~Sansour.
\newblock On the physical assumptions underlying the volumetric-isochoric split
  and the case of anisotropy.
\newblock {\em Eur. J. Mech.-A/Solids}, 27(1):28--39, 2008.

\bibitem{SawyersRivlin78}
K.N. {Sawyers} and R.~{Rivlin}.
\newblock {On the speed of propagation of waves in a deformed compressible
  elastic material.}
\newblock {\em {Z. Angew. Math. Phys.}}, 29:245--251, 1978.

\bibitem{Walton05}
T.~{Sendova} and J.~R. {Walton}.
\newblock {On strong ellipticity for isotropic hyperelastic materials based
  upon logarithmic strain.}
\newblock {\em {Int. J. Non-Linear Mech.}}, 40(2-3):195--212, 2005.

\bibitem{serre1983}
D.~Serre.
\newblock Formes quadratiques et calcul des variations.
\newblock {\em Journal de Math{\'em}atiques Pures et Appliqu{\'e}es},
  62:177--196, 1983.

\bibitem{SilhavyPRE99}
M.~{\v{S}}ilhav{\'y}.
\newblock On isotropic rank one convex functions.
\newblock {\em Proceedings of the Royal Society of Edinburgh}, 129:1081--1105,
  1999.

\bibitem{silhavy2002convexity}
M.~{\v{S}}ilhav{\'y}.
\newblock Convexity conditions for rotationally invariant functions in two
  dimensions.
\newblock In A.~Sequeira, H.B. da~Veiga, and J.H. Videman, editors, {\em
  Applied Nonlinear Analysis}, pages 513--530. Springer US, 2002.

\bibitem{simpson2008bifurcation}
H.~Simpson and S.~Spector.
\newblock On bifurcation in finite elasticity: buckling of a rectangular rod.
\newblock {\em Journal of Elasticity}, 92(3):277--326, 2008.

\bibitem{sivaloganathan1988implications}
J.~Sivaloganathan.
\newblock Implications of rank one convexity.
\newblock In {\em Annales de l'Institut Henri Poincar{\'e} Analyse non
  lin{\'e}aire}, volume~5, pages 99--118, 1988.

\bibitem{Sverak92}
V.~{\v{S}}ver{\'a}k.
\newblock Rank-one convexity does not imply quasiconvexity.
\newblock {\em Proceedings of the Royal Society of Edinburgh: Section A
  Mathematics}, 120:185--189, 1992.

\bibitem{terpstra1939darstellung}
F.J. Terpstra.
\newblock Die {D}arstellung biquadratischer {F}ormen als {S}ummen von
  {Q}uadraten mit {A}nwendung auf die {V}ariationsrechnung.
\newblock {\em Mathematische Annalen}, 116(1):166--180, 1939.

\bibitem{volberg2012ahlfors}
A.~Volberg.
\newblock {Ahlfors-Beurling} operator on radial functions.
\newblock {\em Preprint arXiv:1203.2291}, 2012.

\bibitem{yan1997rank}
B.~Yan.
\newblock On rank-one convex and polyconvex conformal energy functions with
  slow growth.
\newblock {\em Proceedings of the Royal Society of Edinburgh: Section A
  Mathematics}, 127(3):651--663, 1997.

\bibitem{ZubovRudev}
L.M. {Zubov} and A.N. {Rudev}.
\newblock {A criterion for the strong ellipticity of the equilibrium equations
  of an isotropic nonlinearly elastic material.}
\newblock {\em {J. Appl. Math. Mech.}}, 75:432--446, 2011.

\end{thebibliography}

%

\end{footnotesize}

\newpage
\appendix
\section{Appendix}
\addtocontents{toc}{\protect\setcounter{tocdepth}{0}}
\setcounter{section}{1}
\subsection{Additional examples and applications}
\label{appendix:examples}
The criteria given in Sections \ref{section:mainResult} and \ref{section:criteriaDifferentRepresentations} can be applied to a number of isochoric energy functions in order to determine whether or not they are polyconvex or, equivalently, rank-one convex.
\begin{corollary}~\\
The following functions $W:\GLpz\to\R$ are rank-one convex and polyconvex:
\begin{itemize}
\item[i)] $\displaystyle W(F) = \bigg\|\frac{U}{(\det U)^{\ahalf}} - \left(\frac{U}{(\det U)^{\ahalf}}\right)^{-1}\bigg\|^2$\,,
\item[ii)] $\displaystyle W(F) = e^{\norm{\dev_2\log U}^2}\cdot\frac{\norm{F}^2}{\det F}$\,,
\item[iii)] $W(F) = \cosh(\norm{\dev\log U}^2) = \cosh(\norm{\dev\log \sqrt{F^TF}}^2)$\,,
\end{itemize}
The following functions $W:\GLpz\to\R$ are neither rank-one convex nor polyconvex:
\begin{itemize}
\item[iv)] $\displaystyle W(F) = \norm{\dev_2\log U}^\beta$ for $\beta>0$\,,
\item[v)] $\displaystyle W(F) = e^{\norm{\dev_2\log U}^2+\sin(\norm{\dev_2\log U}^2)}$\,.
\end{itemize}
\end{corollary}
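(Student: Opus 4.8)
The plan is to treat the five functions one at a time, in each case checking first that $W$ is objective, isotropic and isochoric (which is immediate) and then reducing the claim to one of the scalar criteria of Sections~\ref{section:mainResult}--\ref{section:criteriaDifferentRepresentations} by writing $W$ in a convenient representation. Concretely, (i) will go through the distortion representation $W=z\circ\K$, (ii) through $W(F)=h(\lambda_1/\lambda_2)$, and (iii), (iv), (v) through $W(F)=\ftilde(\norm{\dev_2\log U}^2)$ (equivalently through $h(\lambda_1/\lambda_2)$). In cases (iii) and (v) the representing function $\ftilde$ is $C^\infty$ on $[0,\infty)$, so Proposition~\ref{prop:mainResultInTermsOfLogSquared} applies; in (iv) the representing function is not smooth at $t=1$, so I will use Theorem~\ref{theorem:mainResult} directly.

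\emph{The three polyconvex cases.} For (i), with $\bar U\colonequals U/(\det U)^{\ahalf}$ having eigenvalues $\sqrt{\lambda_1/\lambda_2}$ and $\sqrt{\lambda_2/\lambda_1}$, one computes $\norm{\bar U-\bar U^{-1}}^2 = 2\big(\tfrac{\lambda_1}{\lambda_2}+\tfrac{\lambda_2}{\lambda_1}-2\big) = 4\,(\K(F)-1)$, so $W=z\circ\K$ with $z(r)=4\,(r-1)$ affine and non-decreasing, and criterion iii) of Proposition~\ref{prop:mainResultInTermsOfK} holds trivially ($4\geq0$). For (ii), using $\norm{F}^2/\det F = \tfrac{\lambda_1}{\lambda_2}+\tfrac{\lambda_2}{\lambda_1}$ and $\norm{\dev_2\log U}^2=\tfrac12\log^2\tfrac{\lambda_1}{\lambda_2}$, one gets $W(F)=h(\lambda_1/\lambda_2)$ with $h(t)=e^{\frac12\log^2 t}\,(t+\tfrac1t)$, a smooth function on $\R^+$; on $[1,\infty)$ this is a product of the two non-negative, non-decreasing, convex functions $t\mapsto e^{\frac12\log^2 t}$ (its convexity amounts to $\log^2 t-\log t+1>0$) and $t\mapsto t+\tfrac1t$, hence convex there, and Corollary~\ref{cor:mainResultForDifferentiableFunctions} yields polyconvexity. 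For (iii), $\ftilde(\eta)=\cosh\eta$, so criterion iv) of Proposition~\ref{prop:mainResultInTermsOfLogSquared} reads $2\eta\cosh\eta+(1-\sqrt{2\eta})\sinh\eta\geq0$; dividing by $\sinh\eta>0$ this is $2\eta\coth\eta+1\geq\sqrt{2\eta}$, which follows from $\coth\eta>1$ by separating the cases $2\eta\geq1$ (then $2\eta\coth\eta+1>2\eta\geq\sqrt{2\eta}$) and $2\eta<1$ (then $\sqrt{2\eta}<1<2\eta\coth\eta+1$).

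\emph{The two non-polyconvex cases.} For (iv), $W(F)=h(\lambda_1/\lambda_2)$ with $h(t)=2^{-\beta/2}\,|\log t|^{\beta}$; on $(1,\infty)$, where $h$ is smooth, $h''(t)=2^{-\beta/2}\,\beta\,\tfrac{(\log t)^{\beta-2}}{t^2}\,\big((\beta-1)-\log t\big)$ is strictly negative as soon as $\log t>\beta-1$, so $h$ is not convex on $[1,\infty)$, hence not on $\R^+$, and criterion iv) of Theorem~\ref{theorem:mainResult} fails. For (v), $\ftilde(\eta)=e^{\eta+\sin\eta}$, and criterion iv) of Proposition~\ref{prop:mainResultInTermsOfLogSquared} becomes $2\eta\big((1+\cos\eta)^2-\sin\eta\big)+(1-\sqrt{2\eta})(1+\cos\eta)\geq0$; this vanishes at $\eta=\pi$, and a short Taylor expansion there (using $1+\cos\eta=O(\varepsilon^2)$ and $(1+\cos\eta)^2-\sin\eta=-\varepsilon+O(\varepsilon^3)$ for $\eta=\pi-\varepsilon$) shows the left-hand side equals $-2\pi\varepsilon+O(\varepsilon^2)<0$ for small $\varepsilon>0$, so the criterion is violated and $W$ is neither rank-one convex nor polyconvex.

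\emph{Expected obstacle.} No individual computation is deep; the one genuinely delicate point will be case (v), where the natural test value $\eta=\pi$ gives equality rather than strict failure, so one must expand to the correct order near $\pi$ and, crucially, approach from below ($\eta\nearrow\pi$) rather than from above (from above the leading term is $+2\pi\varepsilon$). A secondary small point is in (ii): polyconvexity is not in general preserved under products, but after reduction to the scalar function $h$ the claim collapses to the elementary fact that a product of non-negative, non-decreasing, convex functions is convex.
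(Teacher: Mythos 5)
Your proof is correct in all five cases; it follows the paper's overall strategy of reducing each example to one of the scalar criteria, but it makes different (and in places better) choices of representation in the individual items. For (i) you pass through $W=\z\circ\K$ and Proposition \ref{prop:mainResultInTermsOfK} with the affine function $\z(r)=4(r-1)$, whereas the paper computes $h(t)=2\left(t+\tfrac1t\right)-4$ and checks $h''(t)=4/t^3\geq0$ directly via Theorem \ref{theorem:mainResult}; the two routes are interchangeable. In (ii) your observation that $h(t)=e^{\frac12\log^2 t}\left(t+\tfrac1t\right)$ is, on $[1,\infty)$, a product of two non-negative, non-decreasing convex factors (with the convexity of $t\mapsto e^{\frac12\log^2 t}$ reducing to $\log^2 t-\log t+1>0$) is a genuine simplification: the paper instead computes $h''$ in full and verifies a two-term inequality for all $t>0$. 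In (iv) you work directly with $h(t)=2^{-\beta/2}\,|\log t|^{\beta}$ and Theorem \ref{theorem:mainResult}, which quietly repairs a small regularity wrinkle in the paper's argument (the paper applies Proposition \ref{prop:mainResultInTermsOfLogSquared} to $\ftilde(\eta)=\eta^{\beta/2}$, which is not $C^2$ at $\eta=0$ for most $\beta$; the conclusion is unaffected since the criterion only involves $\eta>0$). Case (iii) is essentially identical to the paper's. In (v) you take the hardest possible route: the paper simply evaluates the criterion at $\eta=\pi/2$, where $(1+\cos\eta)^2-\sin\eta=0$ and the whole expression reduces to $1-\sqrt{\pi}<0$, so no Taylor expansion and no care about the side of approach is needed. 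Your expansion as $\eta\nearrow\pi$ is also correct, but the \emph{delicate point} you flag there is an artifact of your choice of test point, not of the problem itself.
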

\begin{proof}
\begin{itemize}
\item[i)] Since the squared Frobenius-norm of a symmetric matrix $X$ is the squared sum of its eigenvalues, we find for $F\in\GLpz$ with singular values $\lambda_1,\lambda_2$:
\[
	W(F) = \left(\frac{\lambda_1}{\sqrt{\lambda_1\,\lambda_2}} - \frac{\lambda_1^{-1}}{\sqrt{\lambda_1^{-1}\,\lambda_2^{-1}}}\right)^2
		+ \left(\frac{\lambda_2}{\sqrt{\lambda_1\,\lambda_2}} - \frac{\lambda_2^{-1}}{\sqrt{\lambda_1^{-1}\,\lambda_2^{-1}}}\right)^2
	= 2\,\left(\sqrt{\frac{\lambda_1}{\lambda_2}} - \sqrt{\frac{\lambda_2}{\lambda_1}}\right)^2
\]
Thus the function $h:\R^+\to\R^+$ with $h(t)=h(\frac1t)$ with $W(F)=h\big(\frac{\lambda_1}{\lambda_2}\big)$ for all $F\in\GLpz$ with singular values $\lambda_1,\lambda_2$ is given by
\[
	h(t) = 2\,\left(\sqrt{t}-\frac{1}{\sqrt{t}}\right)^2 = 2\,\left(t + \frac1t\right) - 4\,,
\]
and we find
\[
	h^{\prime}(t) = 2\,\left(1-\frac{1}{t^2}\right) \qquad\text{as well as}\qquad h^{\prime\prime}(t) = \frac{4}{t^3} \geq 0
\]
for all $t\in\R^+$. Thus, according to Theorem \ref{theorem:mainResult}, $W$ is polyconvex.
\item[ii)] Again, we write $W(F)$ in terms of the singular values $\lambda_1,\lambda_2$ of $F$:
\[
	W(F) \;=\; e^{\norm{\dev_2\log U}^2}\cdot\frac{\norm{F}^2}{\det F}
	\;=\; e^{\frac12\log^2\frac{\lambda_1}{\lambda_2}}\cdot\frac{\lambda_1^2 + \lambda_2^2}{\lambda_1\,\lambda_2}
	\;=\; e^{\frac12\log^2\frac{\lambda_1}{\lambda_2}}\cdot\left(\frac{\lambda_1}{\lambda_2} + \frac{\lambda_2}{\lambda_1}\right)\,.
\]
Then $W(F)=h\big(\frac{\lambda_1}{\lambda_2}\big)$, where
\[
	h(t) = e^{\frac12\log^2 t}\cdot\left(t+\frac1t\right)\,,
\]
and we compute
\[
	h^{\prime}(t) = e^{\frac12\log^2 t}\cdot\left[1-\frac{1}{t^2}+\frac{\log t}{t}\,\left(t+\frac1t\right)\right] = e^{\frac12\log^2 t}\cdot\left[1-\frac{1}{t^2}+{\log t}\cdot\left(1+\frac{1}{t^2}\right)\right]
\]
as well as
\begin{align*}
	h^{\prime\prime}(t) &= e^{\frac12\log^2 t}\cdot\left[ \frac{\log t}{t}\,\left(1-\frac{1}{t^2} + \log t\cdot\left(1+\frac{1}{t^2}\right)\right)
		+ \left(\frac{2}{t^3}+\frac1t\,\left(1+\frac{1}{t^2}\right)-\frac{2\log t}{t^3}\right) \right]\\
	&= e^{\frac12\log^2 t}\cdot\left[ \frac{\log t}{t} - \frac{\log t}{t^3} + \left(\frac1t+\frac{1}{t^3}\right)\,\log^2 t + \frac{2}{t^3} + \frac1t + \frac{1}{t^3} - \frac{2\log t}{t^3} \right]\\
	&= e^{\frac12\log^2 t}\cdot\left[ \frac1t + \frac{3}{t^3} + \left(\frac1t-\frac{3}{t^3}\right)\,\log t + \left(\frac1t+\frac{1}{t^3}\right)\,\log^2(t) \right]\,.
\end{align*}
Therefore, Theorem \ref{theorem:mainResult} states that $W$ is polyconvex if and only if
\begin{equation}
\label{eq:exampleInequality}
	\left(\frac{3}{t^2}-1\right)\,\log t \;\leq\; 1 + \frac{3}{t^2} + \left(1+\frac{1}{t^2}\right)\,\log^2 t
\end{equation}
for all $t>0$. For $t<1$ or $t<\sqrt{3}$, the left-hand side is negative and the inequality is therefore satisfied. If $1\leq t \leq \sqrt{3}$, then $0\leq \log t <1$ and $\frac{3}{t^2}-1\geq0$; thus
\[
	\left(\frac{3}{t^2}-1\right)\,\log t \;\leq\; \frac{3}{t^2}-1 \;<\; 1 + \frac{3}{t^2} + \left(1+\frac{1}{t^2}\right)\,\log^2 t\,,
\]
hence inequality \eqref{eq:exampleInequality} is satisfied in this case as well.
\item[iii)] The function $\ftilde:[0,\infty)\to\R$ with $W(F) = \ftilde(\norm{\dev_2\log U}^2)$ for all $F\in\GLpz$ is given by $\ftilde(\eta)=\cosh(\eta)$. For $\eta\in\R^+$ we find
\[
	2\,\eta\,\ftilde^{\prime\prime}(\eta)+ (1-\sqrt{2\,\eta})\,\ftilde^{\prime}(\eta) \;=\; 2\,\eta\,\cosh(\eta)+ (1-\sqrt{2\,\eta})\,\sinh(\eta) \;\geq\; (2\,\eta + 1-\sqrt{2\,\eta})\,\sinh(\eta) \;\geq\; 0\,,
\]
thus $W$ is polyconvex according to Proposition \ref{prop:mainResultInTermsOfLogSquared}.
\item[iv)] Let $\alpha\colonequals\frac\beta2$. Then $W(F)=\ftilde(\norm{\dev_2\log U}^2)$ for $\ftilde(\eta)=\eta^\alpha$. Since
\[
	2\,\eta\,\ftilde^{\prime\prime}(\eta) + (1-\sqrt{2\,\eta})\,\ftilde^{\prime}(\eta) = 2\,\eta\,\alpha\,(\alpha-1)\,\eta^{\alpha-2} + (1-\sqrt{2\eta})\,\alpha\,\eta^{\alpha-1} = \alpha\,\eta^{\alpha-1}\,\left[2\,\alpha-1-\sqrt{2\,\eta}\right]\,,
\]
we use Proposition \ref{prop:mainResultInTermsOfLogSquared} to find that $W$ is polyconvex if and only if
\[
	0 \;\leq\; 2\,\alpha-1-\sqrt{2\,\eta} \quad\text{ for all }\; \eta\in\R^+\,,
\]
which is obviously not the case for any $\beta=2\,\alpha>0$. This result was also hinted at by Hutchinson and Neale \cite{Hutchinson82}.
\item[v)] We apply Proposition \ref{prop:mainResultInTermsOfLogSquared} to the function $\ftilde$ with $\ftilde(\eta)=e^{\eta+\sin \eta}$. Since
\[
	\ftilde^{\prime}(\eta) = e^{\eta+\sin \eta}\cdot (1+\cos \eta) \quad\text{ and }\quad \ftilde^{\prime\prime}(\eta) = e^{\eta+\sin \eta}\cdot \big( (1+\cos \eta)^2 - \sin\eta \big)\,,
\]
we find
\[
	2\,\eta\,\ftilde^{\prime\prime}(\eta) + (1-\sqrt{2\,\eta})\,\ftilde^{\prime}(\eta) = 2\,\eta\,e^{\eta+\sin \eta}\cdot \big( (1+\cos \eta)^2 - \sin\eta \big) + (1-\sqrt{2\,\eta})\,e^{\eta+\sin \eta}\cdot (1+\cos \eta)
\]
Thus $W$ is polyconvex if and only if
\[
	2\,\eta\,\big( (1+\cos \eta)^2 - \sin\eta \big) + (1-\sqrt{2\,\eta})\,(1+\cos \eta) \;\geq\; 0 \quad\text{ for all }\; \eta\in(0,\infty)\,.
\]
This inequality is not satisfied for $\eta=\frac\pi2$. Note that $\ftilde$ is monotone on $\R^+$ with exponential growth, but is not convex.
\end{itemize}
\end{proof}

\subsection{On $\dist^2\left(\frac{F}{(\det F)^{\ahalf}}\,,\;\SO(2)\right)$}\setcounter{equation}{0}
\label{appendix:distances}
For $F\in\GLpz$, we consider the squared distance from $\frac{F}{(\det F)^{1\!/\!2}}\in\SL(2)$ to the special orthogonal group $\SO(2)$ with respect to different distance measures. Such distances are closely connected to a number of elastic energy functions, including the isochoric quadratic Hencky energy \cite{Neff_Osterbrink_Martin_hencky13}, and they provide an important class of examples for isochoric energy functions on $\GLpz$. In this appendix, we collect some related results which are scattered throughout the literature.
\subsubsection{The Euclidean distance of $F\in\R^{2\times 2}$ to $\SO(2)$}
We first consider the Euclidean distance
\[
	\disteuc^2\left({F},\SO(2)\right)\colonequals\inf_{R\in \SO(2)}\|F-R\|^2
\]
of $F\in\mathbb{R}^{2\times 2}$ to $\SO(2)$, where $\norm{\,.\,}$ denotes the Frobenius matrix norm. In the two-dimensional case, this distance can be explicitly calculated: since
\begin{align*}
\disteuc^2\left({F},\SO(2)\right)=\inf_{R\in \SO(2)}\|F-R\|^2=\inf_{\alpha\in [-\pi, \pi]}\Big\|F-\left(
                                                                                                              \begin{array}{cc}
                                                                                                                \cos \alpha & \sin \alpha \\
                                                                                                                -\sin \alpha & \cos \alpha \\
                                                                                                              \end{array}
                                                                                                            \right)
\Big\|^2\,,
\end{align*}
we find
\begin{align*}
\Big\|\left(
 \begin{array}{cc}
  F_{11}-\cos \alpha & F_{12}-\sin \alpha \\
   F_{21}+\sin \alpha & F_{22}-\cos \alpha \\
    \end{array}
     \right)
\Big\|^2=(F_{11}-\cos\alpha)^2+(F_{12}-\sin \alpha)^2+(F_{21}+\sin \alpha)^2+(F_{22}-\cos \alpha)^2\,.
\end{align*}
Taking the derivative with respect to $\alpha$ yields the stationarity condition
\begin{align*}
(F_{11}+F_{22})\,\sin \alpha+(F_{21}-F_{12})\,\cos \alpha=0\qquad \Longleftrightarrow\qquad \langle \left(
                                                                                                  \begin{array}{c}
                                                                                                    \sin \alpha \\
                                                                                                    \cos \alpha \\
                                                                                                  \end{array}
                                                                                                \right),
                                                                                                \left(
                                                                                                  \begin{array}{c}
                                                                                                    F_{11}+F_{22} \\
                                                                                                    F_{21}-F_{12}\\
                                                                                                  \end{array}
                                                                                                \right)\rangle =0\,,
\end{align*}
which implies
\begin{align*}
\left(
                                                                                                  \begin{array}{c}
                                                                                                    \sin \alpha \\
                                                                                                    \cos \alpha \\
                                                                                                  \end{array}
                                                                                                \right)
=\pm \frac{1}{\sqrt{\|F\|^2+2\, \det F}}\, \left(
                                                                                                  \begin{array}{c}
                                                                                                    -(F_{21}-F_{12}) \\
                                                                                                    F_{11}+F_{22}\\
                                                                                                  \end{array}
                                                                                                \right)\,.
\end{align*}
The minimum is easily seen to be realized by
\begin{align*}
\left(
\begin{array}{c}
\sin \alpha \\
 \cos \alpha \\
 \end{array}
 \right)
=\frac{1}{\sqrt{\|F\|^2+2\, \det F}}\, \left(
\begin{array}{c}
-(F_{21}-F_{12}) \\
 F_{11}+F_{22}\\
 \end{array}
 \right)\,,
\end{align*}
and reinserting yields
\begin{align*}
\disteuc^2\left({F},\SO(2)\right) = \inf_{R\in \SO(2)}\| F-R\|^2=\|F\|^2-2\, \sqrt{\|F\|^2+2\, \det F}+2
\end{align*}
for arbitrary $F\in \mathbb{R}^{2\times 2}$.
Let us recall the Biot energy term
\begin{align*}
W_{\rm Biot}(F)=\|U-\id\|^2=\|U\|^2-2\, \tr(U)+2.
\end{align*}
For $F\in\GLpz$, the Caley-Hamilton formula implies that
\begin{align*}
\|U\|^2-[\tr(U)]^2+2\, \det U=0\qquad \Longrightarrow\qquad \tr(U)=\sqrt{\|U\|^2+2\, \det U}\overset{F\in \GLpz}{=}\sqrt{\|F\|^2+2\, \det F}\,,
\end{align*}
hence
\begin{align*}
W_{\rm Biot}(F)&=\|F\|^2-2\, \sqrt{\|F\|^2+2\, \det F}+2=\left(\sqrt{\|F\|^2+2\, \det F}-1\right)^2+1-2\, \det F\\
&\hspace{-0.5cm}\overset{F\in \GLpz}{=}\|U\|^2-2\, \sqrt{\|U\|^2+2\, \det U}+2
\end{align*}
and we note that
\begin{equation}
\label{eq:wBiotResult}
	\disteuc^2(F,\SO(2)) = \norm{U-\id}^2 = W_{\rm Biot}(F) \quad\text{ for all }\;F\in\GLpz\,,
\end{equation}
while in general $\disteuc^2(F,\SO(2))\geq W_{\rm Biot}(F)$ for $F\in\R^{2\times2}$. Note that $W_{\rm Biot}$ is not rank-one convex \cite{bertram2007rank}.

\subsubsection{The polyconvexity of $F\mapsto\disteuc^2\left(\frac{F}{(\det F)^{\ahalf}}\,,\; \SO(2)\right)$}
\label{section:polyconvexityOfIsochoricDistance}
In order to show that the mapping $F\mapsto \disteuc^2\big(\frac{F}{(\det F)^{\ahalf}},\SO(2)\big)$ is polyconvex on $\GLpz$, we apply \eqref{eq:wBiotResult} to $\frac{F}{(\det F)^{\ahalf}}$ and find
\begin{align*}
	\disteuc^2\left(\frac{F}{(\det F)^{\ahalf}},\SO(2)\right) &= \left(\sqrt{\Big\|\frac{F}{(\det F)^{\ahalf}}\Big\|^2+2\, \det \Big(\frac{F}{(\det F)^{\ahalf}}}\Big)-1\right)^2+1-2\, \det \Big(\frac{F}{(\det F)^{\ahalf}}\Big)\notag\\
	&= \left(\sqrt{\frac{\|F\|^2}{\det F}+2}-1\right)^2-1.
\end{align*}
Since the function
\begin{align*}
t\mapsto \left(\sqrt{t+2}-1\right)^2-1.
\end{align*}
is convex and monotone, we only need to prove that the mapping $F\mapsto\frac{\norm{F}^2}{\det F}$ is polyconvex. This is shown (in a slightly generalized version) in the following lemma, using the criteria developed in Section \ref{section:criteriaDifferentRepresentations}.
\begin{lemma}
Let $\beta>0$. Then the function
\[
	W:\GLpz\to\R\,,\quad W(F) = \left(\frac{\norm{F}^2}{\det F}\right){\rule{0em}{1.2em}}^\beta
\]
is polyconvex (and, equivalently, rank-one convex) if and only if $\beta\geq1$.
\end{lemma}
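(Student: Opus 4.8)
The plan is to apply Proposition~\ref{prop:mainResultInTermsOfK}, the polyconvexity criterion expressed through the distortion function $\K$. The first step is to rewrite the energy: since $\K(F)=\frac12\,\frac{\norm{F}^2}{\det F}$, we have $W(F)=\z(\K(F))$ with $\z:[1,\infty)\to\R$ given by $\z(r)=(2\,r)^\beta=2^\beta\,r^\beta$. The map $r\mapsto r^\beta$ is of class $C^\infty$ on $[1,\infty)$ for every $\beta>0$ --- its singular behaviour near $r=0$ is irrelevant here, as $\K$ only takes values in $[1,\infty)$ --- so the regularity hypothesis of Proposition~\ref{prop:mainResultInTermsOfK} is met, and $W$ is polyconvex (equivalently, rank-one convex) if and only if
\[
	(r^2-1)\,(r+\sqrt{r^2-1})\,\z^{\prime\prime}(r)+\z^{\prime}(r)\geq 0 \qquad\text{for all }\;r\in(1,\infty)\,.
\]

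Next I would substitute $\z^{\prime}(r)=2^\beta\beta\,r^{\beta-1}$ and $\z^{\prime\prime}(r)=2^\beta\beta(\beta-1)\,r^{\beta-2}$ into this inequality and divide through by the strictly positive factor $2^\beta\beta\,r^{\beta-2}$. This reduces the polyconvexity condition to the elementary inequality
\[
	(\beta-1)\,(r^2-1)\,(r+\sqrt{r^2-1})+r\geq 0 \qquad\text{for all }\;r\in(1,\infty)\,,
\]
which is exactly the statement to be characterized in terms of $\beta$.

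For the ``if'' direction: if $\beta\geq1$ then $\beta-1\geq0$, while $r^2-1>0$, $r+\sqrt{r^2-1}>0$ and $r>0$ for $r>1$; hence the left-hand side is nonnegative and $W$ is polyconvex. For the ``only if'' direction (by contraposition), suppose $\beta<1$. Then $\beta-1<0$, and since $(r^2-1)(r+\sqrt{r^2-1})\geq (r^2-1)\,r\geq \frac34\,r^3$ for $r\geq2$, the left-hand side is bounded above by $\frac34(\beta-1)\,r^3+r$, which tends to $-\infty$ as $r\to\infty$. Thus the inequality fails for all sufficiently large $r$, so $W$ is not rank-one convex (and not polyconvex) when $\beta<1$.

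I do not anticipate a genuine obstacle: the argument is a direct substitution into Proposition~\ref{prop:mainResultInTermsOfK} followed by an elementary monotonicity/growth estimate. The only point deserving a word of care is the regularity hypothesis of that proposition, which is satisfied on $[1,\infty)$ as noted above; alternatively, one can bypass $\K$ entirely and apply Corollary~\ref{cor:mainResultForDifferentiableFunctions} directly to the $C^\infty$ function $h(t)=(t+\tfrac1t)^\beta$, whose second derivative on $[1,\infty)$ has the sign of $(\beta-1)(t^2-1)^2+2(t^2+1)$, leading to the same dichotomy.
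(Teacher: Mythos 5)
Your proposal is correct and follows essentially the same route as the paper: both apply Proposition \ref{prop:mainResultInTermsOfK} with $\z(r)=2^\beta r^\beta$, reduce the criterion to $(\beta-1)(r^2-1)(r+\sqrt{r^2-1})+r\geq 0$ for $r>1$, and conclude that this holds precisely when $\beta\geq 1$ because the cubic term dominates as $r\to\infty$. The only cosmetic difference is that the paper rearranges the inequality to $\beta-1\geq -r/\bigl((r^2-1)(r+\sqrt{r^2-1})\bigr)$ and lets the right-hand side tend to $0$, whereas you use an explicit lower bound $\tfrac34 r^3$; both arguments are sound.
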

\begin{proof}
The unique function $\z:[1,\to\infty)\to\R$ with $W(F) = \z\left(\frac12\,\frac{\norm{F}^2}{\det F}\right)$ for all $F\in\GLpz$ is given by $\z(r)=2^\beta\,r^\beta$. Then
\[
	\z^{\prime}(r) = 2^\beta\,\beta\,r^{\beta-1} \quad\text{ and }\quad \z^{\prime\prime}(r) = 2^\beta\,\beta\,(\beta-1)\,r^{\beta-2}\,,
\]
thus according to Proposition \ref{prop:mainResultInTermsOfK}, the function $W$ is polyconvex if and only if
\begin{align*}
	0 \;&\leq\; (r^2-1)\,(r+\sqrt{r^2-1})\,\z^{\prime\prime}(r) + z^{\prime}(r)\nonumber\\
	&=\; 2^\beta\,\beta\,r^{\beta-2}\,\left[(\beta-1)\,(r^2-1)\,(r+\sqrt{r^2-1}) + r\right] \quad\text{ for all }\; r>1\,. \label{eq:inequalityForPowerOfK}
\end{align*}
Since $2^\beta\,\beta\,r^{\beta-2}>0$ for all $\beta>0$ and $r>1$, this inequality is equivalent to
\begin{align*}
	0 \;&\leq\; (\beta-1)\,(r^2-1)\,(r+\sqrt{r^2-1}) + r\\
	\Longleftrightarrow\qquad \beta-1 \;&\geq\; -\,\frac{r}{(r^2-1)\,(r+\sqrt{r^2-1})} \qquad\qquad\text{ for all }\; r>1\,.
\end{align*}
The right hand side in the last equality is always negative, so the polyconvexity condition is satisfied for all $\beta\geq1$. Furthermore, the right hand expression tends to $0$ as $r$ tends to $\infty$, hence the condition cannot be satisfied for $\beta<1$.
\end{proof}
\noindent Note that, in the three-dimensional case, the mapping $F\mapsto\Big(\frac{\norm{F}^3}{\det F}\Big)^\beta$ is polyconvex if and only if $\beta\geq\frac12$, as shown in \cite[Proposition 6]{charrier1988existence}.

\subsubsection{The quasiconvex hull of $\disteuc^2\left({F},\, \SO(2)\right)$}

In contrast to the isochoric function $F\mapsto \disteuc^2\big(\frac{F}{(\det F)^{\ahalf}},\SO(2)\big)$, the squared Euclidean distance of $F$ to $\SO(2)$ is not polyconvex and not even rank-one convex. However, the quasiconvex hull of the function can be computed explicitly using the
Brighi--Theorem, adapted to the two-dimensional case:
\begin{theorem}{\rm \cite[Theorem 3.2, page 310]{Brighi97}}
\label{brighi-theorem}
Let $q:\mathbb{R}^{2\times 2} \rightarrow\mathbb{R}_+$ be a non-negative quadratic form. For a function $\varphi:\R^+\to[0,\infty)$, define $W:\mathbb{R}^{2\times 2}\rightarrow \mathbb{R}$ by
\begin{align*}
	W(F) = \varphi(q(F))\,.
\end{align*}
Let $\mu^*,\alpha\in\R$ be such that
\[
	\mu^*=\inf _{t\in \mathbb{R}_+}\varphi(t)=\varphi(\alpha)\,.
\]
Then
\begin{align*}
	R[W(F)]=Q[W(F)]=P[W(F)]=C[W(F)]=\mu^* \quad\text{ for all }\; F\in \mathbb{R}^{2\times 2} \;\text{ with }\; q(F)\leq \alpha\,,
\end{align*}
where $R[W(F)]$, $Q[W(F)]$, $P[W(F)]$ and $C[W(F)]$ denote the rank-one convex hull, the quasiconvex hull, the polyconvex hull and the convex hull of $W$, respectively.
\end{theorem}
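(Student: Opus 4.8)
The plan is to show that all four envelopes collapse to the constant $\mu^*$ on $\{q\le\alpha\}$ by trapping them between $\mu^*$ from below and the value of a single rank-one lamination from above. First I would record the routine ingredients. For any $W$ one has the pointwise inequalities $C[W]\le P[W]\le Q[W]\le R[W]\le W$, because convexity implies polyconvexity implies quasiconvexity implies rank-one convexity as classes of functions. Next, unless $q\equiv0$ (a trivial case, $W$ being then constant), the map $s\mapsto q(sF)=s^2q(F)$ shows that $q$ takes every value in $[0,\infty)$, so $\inf_F W(F)=\inf_t\varphi(t)=\mu^*$; hence the constant $\mu^*$ is a convex function lying below $W$, and therefore $C[W]\ge\mu^*$ everywhere. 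Combining these, it is enough to prove $R[W](F_0)\le\mu^*$ for every $F_0$ with $q(F_0)\le\alpha$.

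The crux --- and the step I expect to require the real work --- is a two-dimensional algebraic fact: every non-negative quadratic form $q\not\equiv0$ on $\R^{2\times2}$ is strictly positive at some rank-one matrix $\xi\otimes\eta$. I would argue this by writing $q$ as a sum of squares of linear forms, $q=\sum_i\ell_i^2$; if every $\ell_i$ vanished on the rank-one cone $\{F\in\R^{2\times2}:\det F=0\}$, then, since $\det$ is an irreducible quadratic polynomial on $\R^{2\times2}$ and each $\ell_i$ has degree one, each $\ell_i$ would vanish identically, contradicting $q\not\equiv0$. Hence some $\ell_i$, and therefore $q$ itself, is nonzero at a rank-one matrix. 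This is exactly the place where the geometry of $2\times2$ matrices enters decisively (the rank-one variety is the zero set of the single irreducible form $\det$), and it is the reason the theorem is genuinely two-dimensional: the corresponding statement fails for $n\times n$ matrices with $n\ge3$.

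With that fact available the lamination is elementary, and I would finish as follows. Fix $F_0$ with $q(F_0)\le\alpha$; the boundary case $q(F_0)=\alpha$ is immediate, since then $W(F_0)=\varphi(\alpha)=\mu^*$ forces $R[W](F_0)=\mu^*$. If $q(F_0)<\alpha$, choose $\xi\otimes\eta$ with $q(\xi\otimes\eta)>0$ and consider the scalar function $p(t)=q(F_0+t\,\xi\otimes\eta)$, an upward parabola with $p(0)=q(F_0)<\alpha$; thus $\{t:p(t)<\alpha\}$ is a bounded open interval $(t_-,t_+)\ni 0$ with $p(t_\pm)=\alpha$. Setting $F_\pm=F_0+t_\pm\,\xi\otimes\eta$, the difference $F_+-F_-$ has rank one, $F_0$ lies in the open segment $(F_-,F_+)$, and $W(F_\pm)=\varphi(\alpha)=\mu^*$. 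Rank-one convexity of $R[W]$ along this segment, together with $R[W]\le W$, gives $R[W](F_0)\le\mu^*$, and with the lower bound $C[W]\ge\mu^*$ we conclude $C[W](F_0)=P[W](F_0)=Q[W](F_0)=R[W](F_0)=\mu^*$. The degenerate parameter regimes --- $\alpha<0$, where $\{q\le\alpha\}$ is empty, and $\alpha=0$, where $\{q\le\alpha\}$ is the kernel of $q$ --- are covered by the $q(F_0)=\alpha$ sub-case.
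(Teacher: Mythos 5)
The paper offers no proof of this statement at all: it is quoted from Bousselsal--Brighi \cite[Theorem 3.2]{Brighi97} and used as a black box, so there is no internal argument to compare against; what you have supplied is a self-contained proof, and it is correct. The squeeze $\mu^*\le C[W]\le P[W]\le Q[W]\le R[W]\le W$, combined with a single rank-one lamination between two points of the level set $\{q=\alpha\}$, is exactly the right mechanism, and the one substantive ingredient --- that a non-negative quadratic form $q\not\equiv 0$ is strictly positive at some rank-one matrix --- is established. Two remarks on that ingredient. First, your irreducibility-of-$\det$ argument is heavier than necessary: the rank-one matrices $e_i\otimes e_j$ already span $\R^{2\times 2}$, so any linear form vanishing on the rank-one cone vanishes identically, and no Nullstellensatz-type reasoning is required. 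Second, and for the same reason, your claim that this step is ``genuinely two-dimensional'' and fails for $n\ge 3$ is not right: rank-one matrices span $\R^{n\times n}$ for every $n$, so the identical lamination argument proves the analogous statement in all dimensions --- indeed the paper itself describes the result as the Brighi theorem ``adapted to the two-dimensional case,'' the original being dimension-free. A last pedantic point: the case $q\equiv 0$ is not ``trivial'' in your favor --- there $W\equiv\varphi(0)$, which need not equal $\varphi(\alpha)=\mu^*$, so the conclusion as literally stated can fail; this is a degeneracy of the transcribed statement rather than of your argument, but it deserves an explicit exclusion rather than a dismissal.
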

\noindent We apply this theorem to $q:\mathbb{R}^{2\times 2} \rightarrow\mathbb{R}_+$ with
\begin{align*}
q(F)=\|F\|^2+2\, \det F\,.
\end{align*}
Note that $q$ is indeed a non-negative quadratic form due to the arithmetic-geometric mean inequality. Consider the function $\varphi:\mathbb{R}_+\rightarrow\mathbb{R}_+$ with
\begin{align*}
\varphi(t)=(\sqrt{t}-1)^2\quad \Rightarrow\quad \inf_{t\in \mathbb{R}_+}=0=\varphi(1) \quad \Rightarrow\quad  \mu^*=0\,, \ \ \alpha=1\,,
\end{align*}
and let
\[
	W(F) = \varphi(q(F)) = (\sqrt{\norm{F}^2+2\det F}-1)^2\,.
\]
From Theorem \ref{brighi-theorem} we conclude that
\begin{align*}
R[ W(F)]=Q[W(F)]=P[W(F)]=C[W(F)]=0 \quad\text{ for all }\;F\in\R^{2\times2} \;\text{ with }\;q(F)\leq 1\,.
\end{align*}
Now set
\begin{align*}
	\widehat{W}(F) \colonequals
	\left\{\begin{array}{cl}
		0 &:\; q(F)\leq 1  \vspace{1.5mm}\\
		(\sqrt{q(F)}-1)^2 &:\; q(F)\geq 1
	\end{array}\right.
	\;=\;
	\left\{\begin{array}{cc}
		0 &:\; \|F\|^2+2\, \det F\leq 1   \vspace{1.5mm}\\
		(\sqrt{\|F\|^2+2\, \det F}-1)^2 &:\; \|F\|^2+2\, \det F\geq 1
	\end{array}\right.\,.
\end{align*}
Then $\widehat{W}$ is convex (and therefore quasiconvex) as the composition $\widehat{W}=\widehat{\varphi}\circ q$ of the (convex) quadratic form $q$ and the non-decreasing convex function $\widehat{\varphi}:\R^+\to\R$ with
\[
	\widehat{\varphi}(t) \colonequals \left\{
	\begin{array}{cl}
	0 &:\; t\leq 1  \vspace{1.5mm}\\
	(\sqrt{t}-1)^2 &:\; t\geq 1
	\end{array}
	\right.\,.
\]
We observe that $\widehat{W}(F) = 0 = Q[W(F)]$ for all $F\in\R^{2\times2}$ with $q(F)\leq1$ and that $\widehat{W}(F)=W(F)\geq Q[W(F)]$ for all $F\in\R^{2\times2}$ with $q(F)>1$. Thus $\widehat{W}$ is a quasiconvex function with $\widehat{W}(F) \geq Q[W(F)]$ and $\widehat{W}(F) \leq W(F)$ for all $F\in\R^{2\times2}$, hence $\widehat{W}$ is the quasiconvex hull of $W$:
\[
	Q[W(F)]
	= \widehat{W}(F) =
	\left\{\begin{array}{cc}
		0 &:\; \|F\|^2+2\, \det F\leq 1   \vspace{1.5mm}\\
		(\sqrt{\|F\|^2+2\, \det F}-1)^2 &:\; \|F\|^2+2\, \det F\geq 1
	\end{array}\right.\,.
\]
Taking the representation
\begin{align*}
\disteuc^2(F,{\rm SO}(2))=\left(\sqrt{\|F\|^2+2\, \det F}-1\right)^2 + 1-2\,\det F = W(F)  + 1-2\,\det F\,,
\end{align*}
it is easy to see that
\begin{align*}
Q[\disteuc^2(F,{\rm SO}(2))] %
= Q[W(F)] +1-2\,\det F = \widehat{W}(F) +1-2\,\det F\,,
\end{align*}
since $F\mapsto 1-2\, \det F$ is a Null-Lagrangian. We therefore find
\begin{align*}
	Q[\disteuc^2(F,\SO(2))]&=\left\{
	\begin{array}{cc}
		1-2\, \det F &:\; \|F\|^2+2\, \det F\leq 1  \vspace{1.5mm} \\
		(\sqrt{\|F\|^2+2\, \det F}-1)^2+1-2\, \det F &:\; \|F\|^2+2\, \det F\geq 1
	\end{array}\right.\\
	&=\left\{
	\begin{array}{cc}
		1-2\, \det F  &:\; \|F\|^2+2\, \det F\leq 1  \vspace{1.5mm} \\
		\hspace{4.2em}\disteuc^2(F,\,\SO(2)) \hspace{3.98em} &:\; \|F\|^2+2\, \det F\geq 1
	\end{array}\right.
\end{align*}
for $F\in\R^{2\times2}$. The same result has been given by Dolzmann \cite{dolzmann2012,dolzmann2013} with an alternative proof. The quasiconvex hull of the mapping $F\mapsto \disteuc^2(F,\SO(3))$ is not yet known.

\subsubsection{A comparison of distance functions on $\GLpz$}
Let $\dg(F,\SO(2))=\norm{\log U}^2$ denote the \emph{geodesic distance} \cite{neff2013hencky,Neff_Osterbrink_Martin_hencky13,NeffEidelOsterbrinkMartin_Riemannianapproach} of $F$ to $\SO(2)$. Then we can list the following convexity properties of (modified) distance functions to $\SO(2)$:\\[2em]
\newcommand{\tabvspace}{\vphantom{\rule{0em}{1.4em}}}%
\begin{tabular}{ll}
\textbullet\tabvspace\ $\disteuc^2\left({F},\SO(2)\right)=\|U-\id\|^2$& \textbf{is not}  rank-one convex \cite{bertram2007rank},\\[1em]
\textbullet\tabvspace\ $\disteuc^2\left(\frac{F}{(\det F)^{\ahalf}},\SO(2)\right) = \left\|\frac{U}{(\det U)^\ahalf}-\id\right\|^2$& \textbf{is} polyconvex (Section \ref{section:polyconvexityOfIsochoricDistance}),\\[1em] %
\textbullet\tabvspace\ $\dg^2\left({F},\SO(2)\right)=\|\log U\|^2$& \textbf{is not} rank-one convex \cite{Bruhns01,Neff_Diss00},\\[1em]
\textbullet\tabvspace\ $\dg^2\left(\frac{F}{(\det F)^{\ahalf}},\SO(2)\right)=\|\dev_2\log U\|^2\quad$& \textbf{is not} rank-one convex \cite{Neff_Diss00},\\[1em]
\textbullet\tabvspace\ $e^{\dg^2\left({F},\SO(2)\right)}=e^{\|\log U\|^2}$& \textbf{is not} rank-one convex \cite{NeffGhibaLankeit},\\[1em]
\textbullet\tabvspace\ $e^{\dg^2\left(\frac{F}{(\det F)^{\ahalf}},\SO(2)\right)}=e^{\|\dev_2\log U\|^2}$& \textbf{is} polyconvex \cite{ghiba2015exponentiated}.
\end{tabular}
\end{document}